\begin{document}

\title{Inertial accelerated primal-dual methods
for linear equality constrained convex optimization problems \thanks{This work was supported by  the National Natural Science  Foundation of China (11471230).}
}


\author{Xin He  \and
        Rong Hu 	 \and
        Ya-Ping Fang
}


\institute{Xin He \at
              Department of Mathematics, Sichuan University, Chengdu, Sichuan, P.R. China \\
              \email{hexinuser@163.com}           
           \and
           \ Rong Hu \at
              Department of Applied Mathematics, Chengdu University of Information Technology, Chengdu, Sichuan, P.R. China\\
           \email{ronghumath@aliyun.com}
           \and
           Ya-Ping Fang \Letter  \at
           Department of Mathematics, Sichuan University, Chengdu, Sichuan, P.R. China \\
              \email{ypfang@scu.edu.cn}
}

\date{Received: date / Accepted: date}

\maketitle

\begin{abstract}
In this paper, we  propose an inertial  accelerated primal-dual method  for the linear equality constrained convex optimization problem.  When the objective function has a ``nonsmooth + smooth'' composite structure,  we further propose an inexact inertial primal-dual method by linearizing the smooth individual  function  and  solving the subproblem inexactly.  Assuming merely convexity, we prove that the proposed methods  enjoy  $\mathcal{O}(1/k^2)$ convergence rate on the objective residual and the feasibility violation in the primal model.  Numerical results are reported to demonstrate the validity of the  proposed  methods.

\keywords{Inertial accelerated primal-dual method\and Linear equality constrained convex optimization problem\and $\mathcal{O}(1/k^2)$ convergence rate\and Inexactness.}
 \subclass{90C06\and 90C25\and 68W40\and 49M27}
\end{abstract}

 \section{Introduction}

Consider the linear equality constrained convex optimization problem:
	\begin{equation}\label{ques_1}
				\min_{x}  \quad F(x), \quad s.t.  \  Ax = b,
	\end{equation}
where $F: \mathbb{R}^{n}\to\mathbb{R}$ is a  closed convex but possibly nonsmooth function, $A\in\mathbb{R}^{m\times n}$ and $b\in\mathbb{R}^{m}$.  The problem \eqref{ques_1} captures a number of important applications arising in various areas, and the following are three concrete examples.

\noindent {\bf Example 1.1} The basis pursuit problem (see e.g.\cite{Candes2008,Chen2001}):
\begin{equation}\label{ques_2}
		\min_x  \quad \|x\|_1, \quad s.t.  \  Ax = b,
\end{equation}
where  $A\in\mathbb{R}^{m\times n}$ with $m \ll n$, and $\|\cdot\|_1$ is the $\ell_1$-norm of $\mathbb{R}^{n}$ defined by $\|x\|_1 =\sum^n_{i=1} |x_i|$. Algorithms for the basis pursuit problem can  be  found in \cite{Van2009} and \cite{YinO2008}.

\noindent {\bf Example 1.2} The linearly constrained $\ell_1-\ell_2$ minimization problem \cite{KangYW2013}:
\begin{equation}\label{ques_3}
		\min_x  \quad \|x\|_1+\frac{\beta}{2}\|x\|^2_2, \quad s.t.  \  Ax = b,
\end{equation}
where $\beta>0$ and $\|\cdot\|_2$ is the $\ell_2$-norm of $\mathbb{R}^{n}$ defined by $\|x\|^2_2 =\sum^n_{i=1} x_i^2$. When $\beta$ is small enough, a solution of the problem \eqref{ques_3}  is also a solution of the basis pursuit problem \eqref{ques_2}. Since the problem \eqref{ques_3} has the regularization term $\frac{\beta}{2}\|x\|^2_2$,  it is less sensitive to noise than the basis pursuit problem \eqref{ques_2}.


\noindent {\bf Example 1.3} The global consensus problem \cite{BoydP2010}:
\begin{equation*}\label{ques_5}
		\min_{X\in\mathbb{R}^{n\times n}}  \quad F(X) = \sum^N_{i=1}f_i(X_i), \quad s.t.  \ X_i = X_j,\quad \forall i, j\in\{1,2,\cdots N\},
\end{equation*}
where $f_i:\mathbb{R}^n\to\mathbb{R}$ is convex, $ i=1,2,\cdots, N$. The global consensus problem is a widely investigated model that has important applications in signal processing \cite{NedicO2010}, routing of wireless sensor networks \cite{Madan2006} and optimal consensus of agents \cite{ShiJ2012}.

Recall that  $(x^*, \lambda^*)\in \mathbb{R}^n \times \mathbb{R}^m$ is a KKT point of the problem \eqref{ques_1}  if
\begin{equation}\label{saddle_point}
	\begin{cases}
		-A^T\lambda^* \in  \partial F(x^*),\\
		Ax^* -b =0,
	\end{cases}
\end{equation}
where   $\partial F$ is the classical  subdifferential of $F$ defined by
 \[\partial F(x) = \{v\in\mathbb{R}^n | F(y)\geq F(x)+\langle v,y-x \rangle,\quad \forall y\in\mathbb{R}^n\}.\]
Let $\Omega$ be the KKT point set of the problem \eqref{ques_1}. It is well-known that  $x^*$ is a solution of the problem \eqref{ques_1}  if and only if there exists $\lambda^*\in\mathbb{R}^{m}$ such that $(x^*, \lambda^*)\in\Omega$ if and only if
\begin{equation*}\label{eq:intr_1}
	\mathcal{L}(x^*,\lambda)\leq  \mathcal{L}(x^*,\lambda^*)\leq  \mathcal{L}(x,\lambda^*), \qquad \forall (x,\lambda)\in \mathbb{R}^{n}\times\mathbb{R}^m,
\end{equation*}
where $\mathcal{L}:\mathbb{R}^{n}\times\mathbb{R}^m\to\mathbb{R}$ is the Lagrangian function associated with the problem \eqref{ques_1} defined by
\[ \mathcal{L}(x,\lambda) = F(x)+\langle \lambda,Ax-b\rangle.\]
A classical method for solving the problem \eqref{ques_1} is the augmented Lagrangian method (ALM) \cite{Bertsekas1982}:
\begin{eqnarray}\label{al:in1}
	\begin{cases}
		x_{k+1}\in\mathop{\arg\min}_x \mathcal{L}(x,\lambda_k)+\frac{\sigma}{2}\|Ax-b\|^2,\\
		\lambda_{k+1} = \lambda_k+\sigma(Ax_{k+1}-b),
	\end{cases}
\end{eqnarray}
In general, since $\mathcal{L}(x,\lambda_k)+\frac{\sigma}{2}\|Ax-b\|^2$ is not strictly convex, the subproblem  may  have more than one solutions and be difficult to solve. To overcome this disadvantage, the proximal ALM \cite{ChenT1994} has been proposed:
\begin{eqnarray}\label{al:prox}
	\begin{cases}
		x_{k+1}=\mathop{\arg\min}_x F(x)+\langle A^T\lambda_k,x\rangle +\frac{\sigma}{2}\|Ax-b\|^2+\frac{1}{2}\|x-x_k\|_P^2,\\
		\lambda_{k+1} = \lambda_k+\sigma(Ax_{k+1}-b),
	\end{cases}
\end{eqnarray}
where $\|x\|^2_P = x^TPx$ with a positive semidefinite matrix $P$ and $P+\sigma A^TA$ is positive definite.

In some practical situations, the objective function $F$ has the composite structure: $F(x) = f(x) + g(x)$, where $f$ is a  convex but possibly nonsmooth function and $g$ is a convex   smooth function. Then the problem \eqref{ques_1} becomes  the linearly constrained composite convex optimization problem:
\begin{equation}\label{ques_4}
				\min_{x}  \quad f(x)+g(x), \quad s.t.  \  Ax = b.
\end{equation}
An application of the  method \eqref{al:prox}  to the problem \eqref{ques_4} with linearizing the smooth function $g$ leads to  the linearized ALM \cite{Xu2017}:
\begin{eqnarray}\label{al:in2}
	\qquad\begin{cases}
		x_{k+1}\in\mathop{\arg\min}_x f(x)+\langle \nabla g(x_k)+A^T\lambda_k,x\rangle +\frac{\sigma}{2}\|Ax-b\|^2+\frac{1}{2}\|x-x_k\|_P^2,\\
		\lambda_{k+1} = \lambda_k+\sigma(Ax_{k+1}-b).
	\end{cases}
\end{eqnarray}

\subsection{Related works}
 Under the assumption that $F$ is smooth, He and Yuan \cite{HeYuan2010} showed that the iteration-complexity of the method \eqref{al:in1} is $\mathcal{O}(1/k)$ in terms of the objective residual of the associated $\mathcal{L}(x,\lambda)$. When $F$ is nonsmooth, Gu et al. \cite{Gu2014} proved  that the method \eqref{al:in1} enjoys a worst-case $\mathcal{O}(1/k)$ convergence rate in the ergodic sense.  A worst-case $\mathcal{O}(1/k)$ convergence rate in the non-ergodic sense of the method \eqref{al:prox} was shown in \cite{Ma2018}.  When $g$ has a Lipschitz continuous gradient with constant $L_g$ and $P\succ L_g Id$,  Xu \cite{Xu2017} proved that  the method \eqref{al:in2} achieves $\mathcal{O}(1/k)$  convergence rate in the ergodic sense. Tran-Dinh and Zhu \cite{Tran2020} proposed a modified version of the method \eqref{al:in2}  and proved that  the objective residual and feasibility violation sequences generated by the method both enjoy $\mathcal{O}(1/k)$  non-ergodic convergence rate. Liu et al. \cite{Liu2019} investigated the nonergodic convergence rate of an inexact augmented Lagrangian method for the problem \eqref{ques_4}.


Generally, naive first-order methods converge slowly. Much effort has been made to accelerate the  existing first-order methods in past decades. Nesterov \cite{Nesterov1983}  first  proposed an accelerated version of the classical gradient method for a  smooth convex optimization problem, and proved that the accelerated inertial gradient method enjoys $\mathcal{O}(1/k^2)$ convergence rate.  Beck and Teboulle \cite{BeckT2009} proposed an iterative shrinkage-thresholding algorithm  for solving the linear inverse problem, which achieves $\mathcal{O}(1/k^2)$ convergence rate. The acceleration idea of \cite{Nesterov1983} was further  applied  in Nesterov \cite{Nesterov20132}  to design the accelerated methods for  unconstrained convex  composite optimization problems.  Su et al. \cite{SuBC2016} first studied accelerated methods from a continuous-time perspective.   Since then,  some new accelerated inertial methods based on the second-order dynamical system  have been  proposed for unconstrained optimization problems (see e.g. \cite{Apidopoulos2020,AttouchP2016,AttouchPJ2014}).  For more results on inertial methods for unconstrained optimization problems, we refer the reader to \cite{AttouchCPR2018,Goldfarb2013,NocedalJ2006}.
	
Meanwhile, inertial  accelerated methods  for  linearly constrained optimization problems have also been well-developed.  He and Yuan \cite{HeYuan2010} proposed an accelerated inertial ALM for the problem \eqref{ques_1} and proved  that its convergence rate is $\mathcal{O}(1/k^2)$  by using an extrapolation  technique similar to \cite{BeckT2009}.  Kang et al. \cite{KangJ2015} presented an inexact version of the accelerated ALM with  inexact  calculations of subproblems and  showed that the convergence rate   remains $\mathcal{O}(1/k^2)$ under the assumption that $F$ is strongly convex.
Kang et al. \cite{KangYW2013} further presented an accelerated Bregman method for  the linearly constrained $\ell_1-\ell_2$ minimization problem, and a convergence rate of $\mathcal{O}(1/k^2)$ was proved when the accelerated Bregman method is applied to solve the problem \eqref{ques_1}. To linearize the augmented term of the Bregman method, Huang et al. \cite{Huang2013} raised an accelerated linearized Bregman algorithm  with  $\mathcal{O}(1/k^2)$  convergence rate. For the problem \eqref{ques_4}, Tran-Dinh and Zhu \cite{Tran2020} proposed an inertial primal-dual method  which enjoys $\underline{o}(1/k\sqrt{\log k})$ convergence rate.  Xu \cite{Xu2017} proposed an accelerated version of the linearized  ALM \eqref{al:in2}, named the accelerated linearized augmented Lagrangian method,  which is formulated as follows:
\begin{eqnarray}\label{al0}
	\qquad\begin{cases}
		\hat{x}_k=(1-\alpha_k)\bar{x}_k+\alpha_k x_k,\\
		x_{k+1} \in  \mathop{\arg\min}_x f(x)+ \frac{\beta_k}{2}\|Ax-b\|^2+\frac{1}{2}\|x-x_k\|_{P_k}^2+\langle \nabla g(\hat{x}_k)+A^T{\lambda}_k, x\rangle,\\
		\bar{x}_{k+1}=(1-\alpha_k)\bar{x}_k+\alpha_k x_{k+1},\\
      \lambda_{k+1} =\lambda_k+ \gamma_k(Ax_{k+1}-b).
	\end{cases}
\end{eqnarray}
 It was shown in Xu \cite{Xu2017} that the algorithm \eqref{al0} enjoys $\mathcal{O}(1/k^2)$  convergence rate under specific parameter settings. It is worth mentioning that  to achieve the  $\mathcal{O}(1/k^2)$ rate,   linearization to the augmented term is not allowed in the algorithm \eqref{al0}  since it may cause great difficulty on solving subproblems.  Xu \cite{Xu2017} did not discuss the convergence analysis of the method  when the subproblem is solved inexactly.

\subsection{Inertial primal-dual methods}
We first propose Algorithm \ref{al:al1}, an inertial version of  the proximal ALM \eqref{al:prox},  for solving the problem \eqref{ques_1}.  Algorithm \ref{al:al1} is  inspired by  the  second-order primal-dual  dynamical system in \cite{HeSiam,Zeng2019} and the Nesterov accelerated methods for unconstrained optimization problem \cite{AttouchCPR2018,BeckT2009,Nesterov1983}.  When the objective has the composite structure: $F(x)=f(x)+g(x)$,  by linearizing the smooth function $g$ and  introducing the  perturbed sequence $\{\epsilon_k\}_{k\geq 1}$  in Step 2 of Algorithm \ref{al:al1},   we propose an inexact inertial proximal primal-dual method (Algorithm \ref{al:al2}) for  the problem \eqref{ques_4}. As a comparison to Algorithm \ref{al:al1},  we  solve the subproblem inexactly by finding an approximate solution instead of an exact solution.

\begin{algorithm}
        \caption{Inertial proximal primal-dual method for the problem \eqref{ques_1}}
        \label{al:al1}
        {\bf Initialization:} Choose $x_0\in\mathbb{R}^n,\ \lambda_0\in\mathbb{R}^m$. Set $x_1 =x_0,\ \lambda_1=\lambda_0$, $M_0\in\mathbb{S}_+(n)$. Choose parameters $s>0, \alpha\geq 3$. \\
        \For{$k = 1, 2,\cdots$}{
       {\bf Step 1:} Compute $\bar{x}_k = x_k+\frac{k-2}{k+\alpha-2}(x_k-x_{k-1}),\quad \bar{\lambda}_k =  \lambda_k+\frac{k-2}{k+\alpha-2}( \lambda_k- \lambda_{k-1}).$\\
       {\bf Step 2:} Set
        \[ \hat{\lambda}_k =\frac{k+\alpha-2}{\alpha-1}\bar{\lambda}_{k}-\frac{k-1}{\alpha-1}\lambda_k, \quad\eta_k = \frac{k-1}{k+\alpha-2}Ax_{k}+\frac{\alpha-1}{k+\alpha-2}b.\]
     \\
  \quad Choose  $M_k\in\mathbb{S}_+(n)$ and update
        \[x_{k+1} \in \mathop{\arg\min}_x F(x)+\frac{k+\alpha-2}{2sk}\|x-\bar{x}_k\|_{M_k}^2+\frac{sk(k+\alpha-2)}{2(\alpha-1)^2}\|Ax-\eta_k\|^2+\langle  A^T\hat{\lambda}_k, x\rangle.\]
  {\bf Step 3:} $\lambda_{k+1} =\bar{\lambda}_k+ \frac{sk}{k+\alpha-2}(Ax_{k+1}-b+\frac{k-1}{\alpha-1}A(x_{k+1}-x_k))$,\\
\If{A stopping condition is satisfied}{Return $(x_{k+1},\lambda_{k+1})$}
}
\end{algorithm}
\begin{algorithm}
        \caption{Inexact inertial linearized  proximal primal-dual method for the problem  \eqref{ques_4}}
        \label{al:al2}
        {\bf Initialization:} Choose $x_0\in\mathbb{R}^n,\ \lambda_0\in\mathbb{R}^m$. Set $x_1 =x_0,\ \lambda_1=\lambda_0$, $M_0\in\mathbb{S}_+(n)$, $\epsilon_0=0$. Choose parameters $s>0$, $\alpha\geq 3$. \\
        \For{$k = 1, 2,\cdots$}{
       {\bf Step 1:} Compute $\bar{x}_k = x_k+\frac{k-2}{k+\alpha-2}(x_k-x_{k-1}),\quad \bar{\lambda}_k =  \lambda_k+\frac{k-2}{k+\alpha-2}( \lambda_k- \lambda_{k-1}).$\\
       {\bf Step 2:} Set
        \[ \hat{\lambda}_k =\frac{k+\alpha-2}{\alpha-1}\bar{\lambda}_{k}-\frac{k-1}{\alpha-1}\lambda_k, \quad\eta_k = \frac{k-1}{k+\alpha-2}Ax_{k}+\frac{\alpha-1}{k+\alpha-2}b.\]
     \\
   \qquad Choose  $M_k\in\mathbb{S}_+(n)$, $\epsilon_k\in\mathbb{R}^n$ and update
        \[x_{k+1} \in \mathop{\arg\min}_x f(x)+\frac{k+\alpha-2}{2sk}\|x-\bar{x}_k\|_{M_k}^2+\frac{sk(k+\alpha-2)}{2(\alpha-1)^2}\|Ax-\eta_k\|^2+\langle \nabla g(\bar{x}_k)+ A^T\hat{\lambda}_k-\epsilon_k , x\rangle.\]
  {\bf Step 3:} $\lambda_{k+1} =\bar{\lambda}_k+ \frac{sk}{k+\alpha-2}(Ax_{k+1}-b+\frac{k-1}{\alpha-1}A(x_{k+1}-x_k))$,\\
\If{A stopping condition is satisfied}{Return $(x_{k+1},\lambda_{k+1})$}
}
\end{algorithm}

 \subsection{Outline}
 The rest of the paper is organized as follows. In Section 2, we investigate the convergence analysis of the proposed methods. In Section 3, we performed  numerical experiments.  Finally, we give a concluding remark in Section 4.

\section{Convergence analysis}

In this section we analyze the convergence rates of Algorithm \ref{al:al1} and  Algorithm \ref{al:al2}. Assuming merely convexity, we show that both of them  enjoy $\mathcal{O}(1/k^2)$ convergence rates in terms of the objective function and the primal feasibility.

To do so, we first recall some standard notations and results which will be used in the paper.  In what follows, we always use $\|\cdot\|$ to denote the $\ell_2$-norm.
Let $\mathbb{S}_+(n)$ denote the set of all positive semidefinite matrixes in $\mathbb{R}^{n\times n}$ and $Id$ is the identity matrix. For $M\in \mathbb{S}_+(n)$, we introduce the semi-norm on $\mathbb{R}^n$: $\|x\|_M =\sqrt{x^TMx}$ for any $x\in\mathbb{R}^n$. This introduces on $\mathbb{S}_+(n)$ the following partial ordering: for any $M_1, M_2 \in \mathbb{S}_+(n)$,
 \[M_1 \succcurlyeq M_2 \Longleftrightarrow \|x\|_{M_1}\geq \|x\|_{M_2},\quad \forall x\in \mathbb{R}^{n}.\]
For any $x,y\in\mathbb{R}^n$, the following equality holds:
\begin{eqnarray}
	\frac{1}{2}\|x\|_M^2-\frac{1}{2}\|y\|_M^2 = \langle x, M(x-y)\rangle-\frac{1}{2}\|x-y\|_M^2,  \quad \forall M\in\mathbb{S}_+(n).\label{eq:known_2}
\end{eqnarray}

Now, we start to analyze  Algorithm \ref{al:al1}.
\begin{lemma}\label{pro:al1}
	Let $\{(x_k,\lambda_k,\bar {x}_k)\}_{k\geq 1}$ be the sequence generated by Algorithm \ref{al:al1}. Then
\begin{eqnarray}\label{sq:sequ1}
 \frac{k+\alpha-2}{k}M_k(x_{k+1}-\bar{x}_k) \in  - s\left(\partial F(x_{k+1})+A^T(\lambda_{k+1}+\frac{k-1}{\alpha-1}(\lambda_{k+1}-\lambda_{k}))\right).
\end{eqnarray}
\end{lemma}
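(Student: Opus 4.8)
The plan is to obtain \eqref{sq:sequ1} as a rewriting of the first-order optimality condition for the subproblem in Step 2 of Algorithm \ref{al:al1}. Since $F$ is closed and convex and the two added quadratic terms are convex (indeed the second makes the objective coercive in the relevant directions so that $x_{k+1}$ is well defined), Fermat's rule together with the subdifferential sum rule gives
\begin{equation*}
0 \in \partial F(x_{k+1}) + \frac{k+\alpha-2}{sk}M_k(x_{k+1}-\bar{x}_k) + \frac{sk(k+\alpha-2)}{(\alpha-1)^2}A^T(Ax_{k+1}-\eta_k) + A^T\hat{\lambda}_k .
\end{equation*}
Equivalently, $-\frac{k+\alpha-2}{sk}M_k(x_{k+1}-\bar{x}_k)$ belongs to $\partial F(x_{k+1}) + A^T\xi_k$, where $\xi_k := \frac{sk(k+\alpha-2)}{(\alpha-1)^2}(Ax_{k+1}-\eta_k) + \hat{\lambda}_k$. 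Hence it suffices to prove the purely algebraic identity $\xi_k = \lambda_{k+1} + \frac{k-1}{\alpha-1}(\lambda_{k+1}-\lambda_k)$, after which multiplying the inclusion by $-s$ yields exactly \eqref{sq:sequ1}.

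To verify the identity I would first insert $\eta_k = \frac{k-1}{k+\alpha-2}Ax_k + \frac{\alpha-1}{k+\alpha-2}b$ and regroup, obtaining
\begin{equation*}
Ax_{k+1}-\eta_k = \frac{\alpha-1}{k+\alpha-2}\Big( Ax_{k+1}-b + \tfrac{k-1}{\alpha-1}A(x_{k+1}-x_k) \Big),
\end{equation*}
so that $\frac{sk(k+\alpha-2)}{(\alpha-1)^2}(Ax_{k+1}-\eta_k) = \frac{sk}{\alpha-1}\big( Ax_{k+1}-b + \tfrac{k-1}{\alpha-1}A(x_{k+1}-x_k)\big)$. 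The Step 3 update identifies this bracket with $\frac{k+\alpha-2}{sk}(\lambda_{k+1}-\bar{\lambda}_k)$, hence the term equals $\frac{k+\alpha-2}{\alpha-1}(\lambda_{k+1}-\bar{\lambda}_k)$. Adding $\hat{\lambda}_k = \frac{k+\alpha-2}{\alpha-1}\bar{\lambda}_k - \frac{k-1}{\alpha-1}\lambda_k$ cancels the $\bar{\lambda}_k$ contributions and, using $\frac{k+\alpha-2}{\alpha-1} = 1 + \frac{k-1}{\alpha-1}$, leaves precisely $\lambda_{k+1} + \frac{k-1}{\alpha-1}(\lambda_{k+1}-\lambda_k)$, as required.

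The argument is essentially bookkeeping, so I do not expect a genuine obstacle; the only point needing a little care is checking that the penalty term $\frac{sk(k+\alpha-2)}{2(\alpha-1)^2}\|Ax-\eta_k\|^2$ together with the Step 3 formula for $\lambda_{k+1}$ telescopes exactly into the inertial dual combination $\lambda_{k+1} + \frac{k-1}{\alpha-1}(\lambda_{k+1}-\lambda_k)$ — this is the whole reason for the particular coefficients appearing in Steps 2 and 3. An entirely analogous computation, with $\partial F(x_{k+1})$ replaced by $\nabla g(\bar{x}_k) + \partial f(x_{k+1})$ and the perturbation $-\epsilon_k$ carried along, will give the corresponding inclusion for Algorithm \ref{al:al2}.
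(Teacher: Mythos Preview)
Your proposal is correct and follows essentially the same route as the paper's own proof: write the first-order optimality condition for the Step~2 subproblem, then algebraically simplify $\frac{sk(k+\alpha-2)}{(\alpha-1)^2}(Ax_{k+1}-\eta_k)+\hat{\lambda}_k$ by substituting $\eta_k$, invoking the Step~3 update to recognise $\frac{k+\alpha-2}{\alpha-1}(\lambda_{k+1}-\bar{\lambda}_k)$, and adding the definition of $\hat{\lambda}_k$ to arrive at $\lambda_{k+1}+\frac{k-1}{\alpha-1}(\lambda_{k+1}-\lambda_k)$. Your remark about the analogous computation for Algorithm~\ref{al:al2} also matches the paper's Lemma~\ref{pro:al2}.
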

 \begin{proof}
 From step 2, we have
 	 \[ 0\in \partial F(x_{k+1})+\frac{k+\alpha-2}{sk}M_k(x_{k+1}-\bar{x}_k)+\frac{sk(k+\alpha-2)}{(\alpha-1)^2} A^T(Ax_{k+1}-\eta_k)+A^T\hat{\lambda}_k.\]
 This yields
 \begin{equation}\label{eq:pro1_1}
 	\frac{k+\alpha-2}{k}M_k(x_{k+1}-\bar{x}_k)\in -s\left(\partial F(x_{k+1})+A^T\left(\frac{sk(k+\alpha-2)}{(\alpha-1)^2} (Ax_{k+1}-\eta_k)+\hat{\lambda}_k\right)\right).
 \end{equation}
It follows from Step 2 and Step 3 that
 \begin{eqnarray*}\label{eq:pro1_2}
 	 &&	 \frac{sk(k+\alpha-2)}{(\alpha-1)^2}(Ax_{k+1}-\eta_k)+\hat{\lambda}_k\nonumber\\
 	 && \qquad= \frac{sk(k+\alpha-2)}{(\alpha-1)^2} A x_{k+1}-\frac{sk(k-1)}{(\alpha-1)^2}Ax_{k}-\frac{sk}{\alpha-1}b+\hat{\lambda}_k\nonumber  \\
 	 &&\qquad = \frac{sk}{\alpha-1}(Ax_{k+1}-b+\frac{k-1}{\alpha-1}A(x_{k+1}-x_k))+\hat{\lambda}_k\nonumber\\
 &&\qquad = \frac{k+\alpha-2}{\alpha-1}(\lambda_{k+1}-\bar{\lambda}_k)+ \frac{k+\alpha-2}{\alpha-1}\bar{\lambda}_{k}-\frac{k-1}{\alpha-1}\lambda_k \\
 &&\qquad = \lambda_{k+1} +\frac{k-1}{\alpha-1}(\lambda_{k+1}-\lambda_k). \nonumber
 \end{eqnarray*}
This together with \eqref{eq:pro1_1} yields  \eqref{sq:sequ1}.
\end{proof}

\begin{lemma}\label{le_new2}
Suppose that $F$ is a  closed convex function, $\Omega\neq\emptyset$ and $M_{k-1}\succcurlyeq M_k$. Let $\{(x_k,\lambda_k, \bar{x}_k, \hat{\lambda}_k)\}_{k\geq 1}$ be the sequence generated by Algorithm \ref{al:al1} and $(x^*,\lambda^*)\in\Omega$.  Define
\begin{eqnarray}\label{energy1}
	\mathcal{E}_k = \frac{s(k^2-k)}{(\alpha-1)^2}(\mathcal{L}(x_k,\lambda^*)-\mathcal{L}(x^*,\lambda^*))+ \frac{1}{2}\|\hat{x}_k-x^*\|_{M_{k-1}}^2+\frac{1}{2}\|\hat{\lambda}_k-\lambda^*\|^2
\end{eqnarray}
with
\begin{equation}\label{hat_x}
	\hat{x}_k = \frac{k+\alpha-2}{\alpha-1}\bar{x}_k-\frac{k-1}{\alpha-1}x_k.
\end{equation}
Then, for any $k\geq 1$, we have
\begin{eqnarray*}
	\mathcal{E}_{k+1} &\leq& \mathcal{E}_k-\frac{(k+\alpha-1)^2}{2(\alpha-1)^2}(\|x_{k+1}-\bar{x}_k\|_{M_k}^2+\|\lambda_{k+1}-\bar{\lambda}_k\|^2).
\end{eqnarray*}
\end{lemma}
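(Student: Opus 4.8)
The plan is to establish the Lyapunov-type inequality $\mathcal{E}_{k+1}\le\mathcal{E}_k - \frac{(k+\alpha-1)^2}{2(\alpha-1)^2}(\|x_{k+1}-\bar x_k\|_{M_k}^2+\|\lambda_{k+1}-\bar\lambda_k\|^2)$ by chaining together a convexity estimate for the objective term, the subdifferential inclusion from Lemma~\ref{pro:al1}, and the update rule in Step~3, then completing squares with the help of identity~\eqref{eq:known_2}. First I would use the convexity of $F$ together with the inclusion~\eqref{sq:sequ1}: writing $v_{k+1}:=\frac{k+\alpha-2}{sk}M_k(\bar x_k-x_{k+1})-A^T(\lambda_{k+1}+\frac{k-1}{\alpha-1}(\lambda_{k+1}-\lambda_k))\in\partial F(x_{k+1})$, subgradient inequality at both $x^*$ and $x_k$ gives two bounds on $F(x_{k+1})-F(x^*)$ and $F(x_{k+1})-F(x_k)$. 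I would then form the convex combination with weights $\tfrac{\alpha-1}{k+\alpha-1}$ and $\tfrac{k}{k+\alpha-1}$ (the combination dictated by the definition of $\hat x_k$ and the coefficient $\tfrac{s(k^2-k)}{(\alpha-1)^2}$ in $\mathcal{E}_k$), so that the $F$-terms telescope correctly into the difference $\mathcal{E}_{k+1}-\mathcal{E}_k$ in the objective slot, and the inner products $\langle v_{k+1},\cdot\rangle$ produce terms involving $\hat x_{k+1}-x^*$ and $\hat x_k-x^*$ in the $M_k$-metric.

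Next I would handle the dual side. From Step~3 one reads off $\lambda_{k+1}-\bar\lambda_k=\frac{sk}{k+\alpha-2}(Ax_{k+1}-b+\frac{k-1}{\alpha-1}A(x_{k+1}-x_k))$, and combining with the definition of $\hat\lambda_k$ one gets a clean affine relation expressing $\hat\lambda_{k+1}$ (or $\hat\lambda_{k+1}-\lambda^*$) in terms of $\hat\lambda_k-\lambda^*$ plus a multiple of the primal residual $A x_{k+1}-b$ — this is exactly the algebra already carried out at the end of the proof of Lemma~\ref{pro:al1}. Feasibility $Ax^*=b$ then lets me rewrite the cross terms $\langle A^T(\cdots),x_{k+1}-x^*\rangle$ that came from the $F$-estimate as inner products purely among dual variables, which I expand via~\eqref{eq:known_2} to match $\frac12\|\hat\lambda_{k+1}-\lambda^*\|^2-\frac12\|\hat\lambda_k-\lambda^*\|^2$ together with a negative square $-\frac12\|\hat\lambda_{k+1}-\hat\lambda_k\|^2$ (equivalently a multiple of $\|\lambda_{k+1}-\bar\lambda_k\|^2$).

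For the primal proximal term I would apply~\eqref{eq:known_2} with $M=M_k$ to the pair $\hat x_{k+1},\hat x_k$, producing $\frac12\|\hat x_{k+1}-x^*\|_{M_k}^2-\frac12\|\hat x_k-x^*\|_{M_k}^2$ plus $-\frac12\|\hat x_{k+1}-\hat x_k\|_{M_k}^2$; the monotonicity hypothesis $M_{k-1}\succcurlyeq M_k$ is what upgrades $\frac12\|\hat x_k-x^*\|_{M_k}^2$ to the $\frac12\|\hat x_k-x^*\|_{M_{k-1}}^2$ appearing in $\mathcal{E}_k$ (with the right sign, since that term sits on the $\mathcal{E}_k$ side). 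Note $\hat x_{k+1}-\hat x_k$ and $\hat\lambda_{k+1}-\hat\lambda_k$ are, up to the factor $\frac{k+\alpha-1}{\alpha-1}$, equal to $x_{k+1}-\bar x_k$ and $\lambda_{k+1}-\bar\lambda_k$ respectively — indeed from Step~1, $\bar x_k = x_k + \frac{k-2}{k+\alpha-2}(x_k-x_{k-1})$ combined with~\eqref{hat_x} gives $\hat x_{k+1}-\hat x_k=\frac{k+\alpha-1}{\alpha-1}(x_{k+1}-\bar x_k)$ — which accounts for the coefficient $\frac{(k+\alpha-1)^2}{2(\alpha-1)^2}$ in the claimed negative remainder.

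The main obstacle will be bookkeeping: verifying that all the leftover cross terms and the coefficients $\frac{s(k^2-k)}{(\alpha-1)^2}$ versus $\frac{s((k+1)^2-(k+1))}{(\alpha-1)^2}$ cancel exactly, with no residual terms of the wrong sign, and in particular that the assumption $\alpha\ge 3$ is exactly what guarantees nonnegativity of the discrete weights $\frac{k-2}{k+\alpha-2}$ and the telescoping coefficients so that the estimate closes. Concretely I expect the delicate point to be the identity relating $\frac{k+\alpha-2}{k}$ (from Lemma~\ref{pro:al1}), the prox weight $\frac{k+\alpha-2}{sk}$, and the combination weights, which must conspire to turn the gradient-type inner product into a pure telescoped difference of squared $M_k$-norms; once that algebraic identity is pinned down, the rest is assembling the pieces and invoking $M_{k-1}\succcurlyeq M_k$.
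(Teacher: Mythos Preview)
Your overall strategy is exactly the paper's: expand the three pieces of $\mathcal{E}_{k+1}-\mathcal{E}_k$, use the optimality inclusion~\eqref{sq:sequ1} together with convexity of $\mathcal{L}(\cdot,\lambda^*)$, use Step~3 to cancel the primal--dual cross term, and apply identity~\eqref{eq:known_2} plus $M_{k-1}\succcurlyeq M_k$. So the architecture is right, but several concrete coefficients in your sketch are off and one of your stated reasons is wrong.

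First, the recursion for $\hat x$ is $\hat x_{k+1}-\hat x_k=\dfrac{k+\alpha-2}{\alpha-1}(x_{k+1}-\bar x_k)$, not $\dfrac{k+\alpha-1}{\alpha-1}$; check it directly from~\eqref{hat_x} and Step~1 (the paper's own proof obtains the remainder $\frac{(k+\alpha-2)^2}{2(\alpha-1)^2}(\cdots)$, so the exponent $k+\alpha-1$ in the displayed conclusion is a typo). Second, the correct ``weights'' in the subgradient step are not a convex combination $\frac{\alpha-1}{k+\alpha-1},\frac{k}{k+\alpha-1}$: from $\hat x_{k+1}-x^*=(x_{k+1}-x^*)+\frac{k-1}{\alpha-1}(x_{k+1}-x_k)$ one simply adds the two subgradient inequalities with coefficients $1$ and $\frac{k-1}{\alpha-1}$, and the global prefactor $\frac{sk}{\alpha-1}$ comes from~\eqref{eq:known_2} applied with the relation above. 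Third, and most importantly, your explanation of where $\alpha\ge 3$ enters is incorrect. It is \emph{not} used to make the momentum weight $\frac{k-2}{k+\alpha-2}$ nonnegative (that plays no role in the estimate). After all cancellations the leftover term is
\[
\frac{(3-\alpha)sk}{(\alpha-1)^2}\bigl(\mathcal{L}(x_{k+1},\lambda^*)-\mathcal{L}(x^*,\lambda^*)\bigr),
\]
and $\alpha\ge 3$ together with $\mathcal{L}(x_{k+1},\lambda^*)\ge\mathcal{L}(x^*,\lambda^*)$ (since $(x^*,\lambda^*)\in\Omega$) is exactly what makes this nonpositive. If you carry out the algebra with the corrected coefficients you will see this residual appear; with your current indices it will not close.
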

\begin{proof}
By computation,
 \begin{eqnarray}\label{eq:th1_3}	
	\hat{x}_{k+1} &=& \frac{k+\alpha-1}{\alpha-1}(x_{k+1}+\frac{k-1}{k+\alpha-1}(x_{k+1}-x_k))-\frac{k}{\alpha-1}x_{k+1}\nonumber \\
	 &=& \frac{k+\alpha-2}{\alpha-1}x_{k+1}-\frac{k-1}{\alpha-1}x_k\\
	 &=& \frac{k+\alpha-2}{\alpha-1}(x_{k+1}-\bar{x}_k)+ \frac{k+\alpha-2}{\alpha-1}\bar{x}_k-\frac{k-1}{\alpha-1}x_k\nonumber\\
	 &=&  \hat{x}_k+\frac{k+\alpha-2}{\alpha-1}(x_{k+1}-\bar{x}_k)\nonumber
\end{eqnarray}
and
\begin{equation}\label{eq:th1_4}
	 \hat{x}_{k+1}-x^* = x_{k+1}-x^*+\frac{k-1}{\alpha-1}(x_{k+1}-x_k) .
\end{equation}
Similarly, we have
\begin{equation}\label{eq:th1_5}
	  \hat{\lambda}_{k+1} = \hat{\lambda}_k+\frac{k+\alpha-2}{\alpha-1}(\lambda_{k+1}-\bar{\lambda}_k)
\end{equation}
and
\begin{equation}\label{eq:th1_6}
	\hat{\lambda}_{k+1}-\lambda^* = \lambda_{k+1}-\lambda^*+\frac{k-1}{\alpha-1}(\lambda_{k+1}-\lambda_k).
\end{equation}

By the definition of $\mathcal{L}(x,\lambda)$,   we get
$\partial_x \mathcal{L}(x,\lambda) = \partial F(x)+A^T\lambda$. Combining this and equality \eqref{eq:th1_6}, we can rewrite \eqref{sq:sequ1} as
\begin{eqnarray*}
	\frac{k+\alpha-2}{k}M_k(x_{k+1}-\bar{x}_k)&\in&  - s(\partial F(x_{k+1})+A^T\lambda^*+A^T(\lambda_{k+1}-\lambda^*+\frac{k-1}{\alpha-1}(\lambda_{k+1}-\lambda_{k})))\\
	& =&  - s\partial_x \mathcal{L}(x_{k+1},\lambda^*)-sA^T(\hat{\lambda}_{k+1}-\lambda^*),
\end{eqnarray*}
which implies
\begin{equation}\label{eq:th1_7}
	\xi_k :=-\frac{k+\alpha-2}{sk}M_k(x_{k+1}-\bar{x}_k)- A^T(\hat{\lambda}_{k+1}-\lambda^*)\in \partial_x \mathcal{L}(x_{k+1},\lambda^*).
\end{equation}
Since $M_{k-1}\succcurlyeq M_k\succcurlyeq 0$, it follows from \eqref{eq:known_2} and \eqref{eq:th1_3} that
\begin{eqnarray}\label{eq:th1_8}
	&&\frac{1}{2}\|\hat{x}_{k+1}-x^*\|_{M_k}^2-\frac{1}{2}\|\hat{x}_k-x^*\|_{M_{k-1}}^2\nonumber \\
	&&\qquad\quad =\frac{1}{2}\|\hat{x}_{k+1}-x^*\|_{M_k}^2-\frac{1}{2}\|\hat{x}_k-x^*\|_{M_{k}}^2-\frac{1}{2}\|\hat{x}_k-x^*\|_{M_{k-1}-M_k}^2\nonumber \\
	&& \qquad\quad\leq \langle \hat{x}_{k+1}-x^*, M_k(\hat{x}_{k+1}-\hat{x}_k)\rangle-\frac{1}{2}\|\hat{x}_{k+1}-\hat{x}_k\|_{M_k}^2 \nonumber \\
	&&\qquad\quad = \frac{k+\alpha-2}{\alpha-1}\langle \hat{x}_{k+1}-x^*, M_k(x_{k+1}-\bar{x}_k)\rangle-\frac{(k+\alpha-2)^2}{2(\alpha-1)^2}\|x_{k+1}-\bar{x}_k\|_{M_k}^2 \\
	&&\qquad\quad =  -\frac{sk}{\alpha-1}(\langle \hat{x}_{k+1}-x^*, \xi_k\rangle  +\langle \hat{x}_{k+1}-x^*,A^T(\hat{\lambda}_{k+1}-\lambda^*)\rangle)\nonumber\\
	&&\qquad\qquad -\frac{(k+\alpha-2)^2}{2(\alpha-1)^2}\|x_{k+1}-\bar{x}_k\|_{M_k}^2.\nonumber
\end{eqnarray}
Since $\mathcal{L}(x,\lambda^*)$ is a convex function with respect to $x$, from \eqref{eq:th1_4} and \eqref{eq:th1_7} we get
\begin{eqnarray}\label{eq:th1_9}
	\langle \hat{x}_{k+1}-x^*, \xi_k \rangle &=&\langle x_{k+1}-x^*, \xi_k\rangle +\frac{k-1}{\alpha-1}\langle x_{k+1}-x_k, \xi_k\rangle\nonumber\\
	&\geq&  \mathcal{L}(x_{k+1},\lambda^*)-\mathcal{L}(x^*,\lambda^*)+\frac{k-1}{\alpha-1}(\mathcal{L}(x_{k+1},\lambda^*)-\mathcal{L}(x_{k},\lambda^*)).
\end{eqnarray}
Combining \eqref{eq:th1_8}  and \eqref{eq:th1_9} together, we have
\begin{eqnarray}\label{eq:th1_10}
	&&\frac{1}{2}\|\hat{x}_{k+1}-x^*\|_{M_k}^2-\frac{1}{2}\|\hat{x}_k-x^*\|_{M_{k-1}}^2\nonumber\\
	&&\quad \leq -\frac{sk}{\alpha-1}(\mathcal{L}(x_{k+1},\lambda^*)-\mathcal{L}(x^*,\lambda^*)) -  \frac{s(k^2-k)}{(\alpha-1)^2}(\mathcal{L}(x_{k+1},\lambda^*)-\mathcal{L}(x_{k},\lambda^*))\nonumber  \\
	&&\qquad -\frac{sk}{\alpha-1}\langle \hat{x}_{k+1}-x^*,A^T(\hat{\lambda}_{k+1}-\lambda^*)\rangle-\frac{(k+\alpha-2)^2}{2(\alpha-1)^2}\|x_{k+1}-\bar{x}_k\|_{M_k}^2.
\end{eqnarray}
Since $Ax^*=b$, it follows from Step 3 of Algorithm \ref{al:al1} and \eqref{eq:th1_4} that
\begin{equation*}\label{eq:th1_11}
	 \lambda_{k+1}-\bar{\lambda}_k=\frac{sk}{k+\alpha-2}(Ax_{k+1}-Ax^*+\frac{k-1}{\alpha-1}A(x_{k+1}-x_k)) = \frac{sk}{k+\alpha-2}A(\hat{x}_{k+1}-x^*).
\end{equation*}
This together with \eqref{eq:known_2} and \eqref{eq:th1_5} yields
\begin{eqnarray}\label{eq:th1_12}
	&&\frac{1}{2}\|\hat{\lambda}_{k+1}-\lambda^*\|^2-\frac{1}{2}\|\hat{\lambda}_k-\lambda^*\|^2 = \langle \hat{\lambda}_{k+1}-\lambda^*, \hat{\lambda}_{k+1}-\hat{\lambda}_k\rangle-\frac{1}{2}\|\hat{\lambda}_{k+1}-\hat{\lambda}_k\|^2 \nonumber\\
	&&\qquad\qquad = \frac{k+\alpha-2}{\alpha-1}\langle \hat{\lambda}_{k+1}-\lambda^*,\lambda_{k+1}-\bar{\lambda}_k
\rangle-\frac{(k+\alpha-2)^2}{2(\alpha-1)^2}\|\lambda_{k+1}-\bar{\lambda}_k\|^2\\
 	 &&\qquad\qquad = \frac{sk}{\alpha-1}\langle \hat{\lambda}_{k+1}-\lambda^*,A(\hat{x}_{k+1}-x^*)\rangle-\frac{(k+\alpha-2)^2}{2(\alpha-1)^2}\|\lambda_{k+1}-\bar{\lambda}_k\|^2.\nonumber
\end{eqnarray}
It follows from \eqref{eq:th1_10} and \eqref{eq:th1_12} that
\begin{eqnarray*}\label{eq:th1_14}
	&&\mathcal{E}_{k+1}-\mathcal{E}_k\nonumber \\
	&&\qquad =\frac{s(k^2+k)}{(\alpha-1)^2}(\mathcal{L}(x_{k+1},\lambda^*)-\mathcal{L}(x^*,\lambda^*))- \frac{s(k^2-k)}{(\alpha-1)^2}(\mathcal{L}(x_k,\lambda^*)-\mathcal{L}(x^*,\lambda^*))\nonumber\\	
	&&\qquad\quad  +\frac{1}{2}\|\hat{x}_{k+1}-x^*\|_{M_k}^2-\frac{1}{2}\|\hat{x}_k-x^*\|_{M_{k-1}}^2+\frac{1}{2}\|\hat{\lambda}_{k+1}-\lambda^*\|^2-\frac{1}{2}\|\hat{\lambda}_k-\lambda^*\|^2\\
	&&\qquad \leq \frac{(3-\alpha)sk}{(\alpha-1)^2}(\mathcal{L}(x_{k+1},\lambda^*)-\mathcal{L}(x^*,\lambda^*)) -\frac{(k+\alpha-2)^2}{2(\alpha-1)^2}(\|x_{k+1}-\bar{x}_k\|_{M_k}^2+\|\lambda_{k+1}-\bar{\lambda}_k\|^2)\nonumber\\
	&&\qquad\leq    -\frac{(k+\alpha-2)^2}{2(\alpha-1)^2}(\|x_{k+1}-\bar{x}_k\|_{M_k}^2+\|\lambda_{k+1}-\bar{\lambda}_k\|^2),\nonumber
\end{eqnarray*}
where the last inequality follows from $\alpha\geq 3$ and $(x^*,\lambda^*)\in\Omega$. This yields the desire result.
\end{proof}

To obtain the fast convergence rates, we need the following lemma.
\begin{lemma}(\cite[Lemma 2]{Li2019}, \cite[Lemma 3.18]{Lin2020})\label{le_new3}
	Let $\{a_k\}_{k=1}^{+\infty}$ be a sequence of vectors such that 
	\[\|(\tau+(\tau-1)K)a_{K+1}+\sum^{K}_{k=1}a_k\|\leq 	C,\qquad  \forall K\geq 1, \]
where $\tau>1$ and $C\geq 0$. Then $\|\sum^{K}_{k=1}a_k\|\leq C$  for all $K\geq 1$.
\end{lemma}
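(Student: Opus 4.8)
The plan is to pass to the partial sums $S_K:=\sum_{k=1}^{K}a_k$, rewrite the hypothesis as a two-term recursion in the $S_K$'s, and then resolve that recursion by a discrete ``integrating factor'' (a weighted telescoping). First I would substitute $a_{K+1}=S_{K+1}-S_K$ into the bound and simplify the coefficients using $\tau+(\tau-1)K=(\tau-1)(K+1)+1$; the assumption then reads
\[
\bigl\|(t_K+1)S_{K+1}-t_K S_K\bigr\|\le C,\qquad \forall\, K\ge 1,
\]
with $t_K:=(\tau-1)(K+1)>0$ (positivity of $t_K$ is exactly where $\tau>1$ enters). The decisive structural feature is that the coefficient of $S_{K+1}$ exceeds that of $S_K$ by precisely one.

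Next I would pick weights $W_1:=1$ and $W_{K+1}:=\frac{t_K+1}{t_K}W_K$, so that $(t_K+1)S_{K+1}-t_K S_K=\frac{t_K}{W_K}\bigl(W_{K+1}S_{K+1}-W_K S_K\bigr)$ and hence $\|W_{K+1}S_{K+1}-W_K S_K\|\le\frac{W_K}{t_K}\,C$. Because $W_{K+1}-W_K=\frac{W_K}{t_K}$, summing over $K=1,\dots,N-1$ telescopes both sides and yields $\|W_N S_N-W_1 S_1\|\le C\,(W_N-W_1)$, i.e.
\[
\|S_N\|\le C+\frac{W_1}{W_N}\bigl(\|S_1\|-C\bigr).
\]
Since $\{W_K\}$ is increasing we have $W_1/W_N\le 1$, so the desired bound $\|S_N\|\le C$ follows as soon as $\|S_1\|\le C$.

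The one step that needs care is therefore controlling the leftmost term $\|S_1\|=\|a_1\|$: the recursion only ties $S_{K+1}$ to $S_K$, so it propagates whatever size $\|S_1\|$ has, and this is where the argument really pivots. Here one uses the base instance of the bound, $\|\tau a_1\|\le C$ (the $K=0$ case, with the empty sum), which gives $\|a_1\|\le C/\tau\le C$ since $\tau>1$; in the intended applications this first term is typically zero anyway. Once $\|S_1\|\le C$ is in hand the rest is completely elementary, and in fact one can dispense with the weights altogether: $(t_K+1)\|S_{K+1}\|\le t_K\|S_K\|+C$ immediately gives $\|S_{K+1}\|\le\max\{\|S_K\|,C\}$, and an induction started from $\|S_1\|\le C$ delivers $\|S_K\|\le C$ for all $K\ge 1$.
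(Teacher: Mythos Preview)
The paper does not prove this lemma; it is quoted from \cite{Li2019,Lin2020} without argument, so there is no in-paper proof to compare against. Judged on its own, your argument is the natural one and is correct.

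The reduction to partial sums is exactly right: with $S_K=\sum_{k\le K}a_k$ and $t_K=(\tau-1)(K+1)>0$ the hypothesis becomes $\|(t_K+1)S_{K+1}-t_KS_K\|\le C$, whence $\|S_{K+1}\|\le\max\{\|S_K\|,C\}$ by the triangle inequality, and the whole lemma collapses to the base case $\|S_1\|\le C$. Your caution at that point is not merely prudent but necessary: taken literally (hypothesis only for $K\ge1$) the statement is false. In the scalar case with $\tau=2$, pick any $S_1\ne0$ and set $S_{K+1}=\frac{K+1}{K+2}\,S_K$; then $(t_K+1)S_{K+1}-t_KS_K=0$ for every $K\ge1$, so the hypothesis holds with $C=0$, yet $\|S_1\|>0$.

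The repair is precisely the one you indicate: one needs the $K=0$ instance (empty sum), giving $\tau\|a_1\|\le C$ and hence $\|S_1\|\le C$. In the paper's only application of the lemma (the proof of Theorem~\ref{th:cov_al1}) one takes $a_k=(\alpha-3)(k-1)(Ax_k-b)$, so $a_1=0$ and the base case is automatic; your parenthetical remark about this is on target. I would state the discrepancy with the printed range $K\ge1$ explicitly rather than leave it in a parenthesis. The weighted-telescoping version you sketch first is also fine but unnecessary once you observe the convex-combination bound; the short induction you give at the end is the cleanest way to finish.
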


Now, we discuss the $\mathcal{O}(1/k^2)$ convergence rate of  Algorithms \ref{al:al1}.
\begin{theorem}\label{th:cov_al1}
	Suppose that $F$ is a  closed convex function, $\Omega\neq\emptyset$ and $M_{k-1}\succcurlyeq M_k$. Let $\{(x_k,\lambda_k,\bar{x}_k,\\
	\bar{\lambda}_k)\}_{k\geq 1}$ be the sequence generated by Algorithm \ref{al:al1} and  $(x^*,\lambda^*)\in\Omega$. The following conclusions hold:
\begin{itemize}
\item[(i)]$\sum_{k=1}^{+\infty}k^2(\|x_{k+1}-\bar{x}_k\|_{M_k}^2+\|\lambda_{k+1}-\bar{\lambda}_k\|^2) < +\infty$.
	\item[(ii)] For all $k> 1$,
	\begin{eqnarray*}
		&&\|Ax_{k}-b\| \leq  \frac{4(\alpha-1)^2\sqrt{2\mathcal{E}_{1}}}{s(k-1)(k+\alpha-3)},\\
		&&|F(x_k)-F(x^*)|\leq  \frac{(\alpha-1)^2\mathcal{E}_{1}}{s(k^2-k)}+ \frac{4(\alpha-1)^2\sqrt{2\mathcal{E}_{1}}\|\lambda^*\|}{s(k-1)(k+\alpha-3)},
	\end{eqnarray*}
where  $\mathcal{E}_{1}=\frac{1}{2}\|x_1-x^*\|_{M_0}^2+\frac{1}{2}\|\lambda_1-\lambda^*\|^2$. 
\end{itemize}
\end{theorem}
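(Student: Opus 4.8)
The plan is to extract everything from the two facts already in hand: the one-step descent estimate of Lemma \ref{le_new2} and the summation lemma (Lemma \ref{le_new3}). First observe that the right-hand side correction in Lemma \ref{le_new2} is nonpositive, so $\{\mathcal E_k\}_{k\ge1}$ is nonincreasing and $\mathcal E_k\le\mathcal E_1$ for every $k$; also, since $(x^*,\lambda^*)\in\Omega$ forces $0\in\partial_x\mathcal L(x^*,\lambda^*)$ and $\mathcal L(\cdot,\lambda^*)$ is convex, each summand of $\mathcal E_k$ is nonnegative, hence $\mathcal E_k\ge0$. Part (i) is then immediate: telescoping Lemma \ref{le_new2} over $k=1,\dots,K$ gives $\sum_{k=1}^{K}\frac{(k+\alpha-1)^2}{2(\alpha-1)^2}(\|x_{k+1}-\bar x_k\|_{M_k}^2+\|\lambda_{k+1}-\bar\lambda_k\|^2)\le\mathcal E_1-\mathcal E_{K+1}\le\mathcal E_1$, and letting $K\to\infty$ together with $k^2\le(k+\alpha-1)^2$ (valid for $\alpha\ge3$) yields the claimed summability.

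For (ii) I would first reduce the objective bounds to the feasibility bound. From $\mathcal E_k\le\mathcal E_1$ I get $\frac{s(k^2-k)}{(\alpha-1)^2}(\mathcal L(x_k,\lambda^*)-\mathcal L(x^*,\lambda^*))\le\mathcal E_1$ and, separately, $\frac12\|\hat\lambda_k-\lambda^*\|^2\le\mathcal E_1$. Since $Ax^*=b$, we have $\mathcal L(x_k,\lambda^*)-\mathcal L(x^*,\lambda^*)=F(x_k)-F(x^*)+\langle\lambda^*,Ax_k-b\rangle$; combining this with the first inequality bounds $F(x_k)-F(x^*)$ from above (once $\|Ax_k-b\|$ is controlled), while $\mathcal L(x_k,\lambda^*)\ge\mathcal L(x^*,\lambda^*)$ bounds it from below by $-\|\lambda^*\|\,\|Ax_k-b\|$. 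So the whole theorem hinges on proving $\|Ax_k-b\|\le\frac{4(\alpha-1)^2\sqrt{2\mathcal E_1}}{s(k-1)(k+\alpha-3)}$.

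This feasibility estimate is the main obstacle, and here I would use Lemma \ref{le_new3}. Writing $r_j:=Ax_j-b$ and combining \eqref{eq:th1_5} with Step 3 (after substituting $Ax^*=b$) gives $\hat\lambda_{k+1}-\hat\lambda_k=\frac{sk}{\alpha-1}r_{k+1}+\frac{sk(k-1)}{(\alpha-1)^2}(r_{k+1}-r_k)$. Introducing $u_k:=\frac{s(k-1)k}{(\alpha-1)^2}r_k$ (so $u_1=0$) and $a_k:=\frac{s(\alpha-3)}{(\alpha-1)^2}(k-1)r_k$, a short manipulation rewrites this as $\hat\lambda_{k+1}-\hat\lambda_k=(u_{k+1}-u_k)+\frac{sk(\alpha-3)}{(\alpha-1)^2}r_{k+1}$, and telescoping from $1$ to $K$ (using $\hat\lambda_1=\lambda_1$ and $u_{K+1}=\frac{K+1}{\alpha-3}a_{K+1}$) yields the identity $\hat\lambda_{K+1}-\hat\lambda_1=\frac{K+\alpha-2}{\alpha-3}\,a_{K+1}+\sum_{k=1}^{K}a_k$. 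This is exactly the hypothesis of Lemma \ref{le_new3} with $\tau=\frac{\alpha-2}{\alpha-3}>1$ (for $\alpha>3$; the degenerate case $\alpha=3$ makes the sum disappear and follows by a direct one-line estimate) and $C=\|\hat\lambda_{K+1}-\hat\lambda_1\|\le\|\hat\lambda_{K+1}-\lambda^*\|+\|\hat\lambda_1-\lambda^*\|\le2\sqrt{2\mathcal E_1}$, the last step using $\|\hat\lambda_j-\lambda^*\|^2\le2\mathcal E_j\le2\mathcal E_1$ and $\hat\lambda_1=\lambda_1$. Lemma \ref{le_new3} then gives $\|\sum_{k=1}^{K}a_k\|\le2\sqrt{2\mathcal E_1}$, hence $\frac{K+\alpha-2}{\alpha-3}\|a_{K+1}\|\le4\sqrt{2\mathcal E_1}$ by the triangle inequality; unpacking $a_{K+1}=\frac{s(\alpha-3)}{(\alpha-1)^2}K\,r_{K+1}$ and reindexing $k=K+1$ produces the stated bound on $\|Ax_k-b\|$. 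Substituting this back into the two inequalities of the previous paragraph completes (ii). The only real difficulty is spotting the correct auxiliary sequence $\{a_k\}$ and the exponent $\tau$ so that the residual recursion matches Lemma \ref{le_new3}; the rest is routine constant-tracking.
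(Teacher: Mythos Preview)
Your proposal is correct and follows essentially the same route as the paper's proof: both telescope the descent inequality of Lemma~\ref{le_new2} for (i), use $\mathcal E_k\le\mathcal E_1$ to bound $\|\hat\lambda_k-\lambda^*\|$, rewrite $\hat\lambda_{K+1}-\hat\lambda_1$ as a weighted sum of residuals $r_k=Ax_k-b$, and then apply Lemma~\ref{le_new3} with $\tau=\frac{\alpha-2}{\alpha-3}$ to extract the $\mathcal O(1/k^2)$ feasibility bound (handling $\alpha=3$ separately), from which the objective estimate follows. The only cosmetic differences are that you absorb the factor $s(\alpha-3)/(\alpha-1)^2$ into the sequence $a_k$ and introduce the auxiliary $u_k$ to organize the telescoping, whereas the paper keeps the scaling outside and telescopes directly; the resulting identities and constants are identical.
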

\begin{proof}
From Lemma \ref{le_new2}, we have
\begin{equation}\label{eq:th1_16}
	\mathcal{E}_{k+1}- \mathcal{E}_k \leq  -\frac{(k+\alpha-2)^2}{2(\alpha-1)^2}(\|x_{k+1}-\bar{x}_k\|_{M_k}^2+\|\lambda_{k+1}-\bar{\lambda}_k\|^2)\leq 0, \quad \forall k\geq 1.
\end{equation}
By the definition of $\mathcal{E}_{k}$ and \eqref{eq:th1_16},  $\{\mathcal{E}_{k}\}_{k\geq 1}$ is a nonincreasing and positive sequence.
As a consequence, $\mathcal{E}_{k}$  converges  to some point.  It follows from  \eqref{eq:th1_14} that
\begin{eqnarray}\label{eq:th1_17}
	&&\sum_{k=1}^{+\infty} \frac{(k+\alpha-1)^2}{2(\alpha-1)^2}(\|x_{k+1}-\bar{x}_k\|_{M_k}^2+\|\lambda_{k+1}-\bar{\lambda}_k\|^2))\nonumber\\
	&&\quad \leq \lim_{K\to+\infty} \sum_{k=1}^K (\mathcal{E}_{k}-\mathcal{E}_{k+1})\nonumber \\
	&&\quad = \mathcal{E}_{1}-\lim_{K\to+\infty}\mathcal{E}_{K+1}\\
	&& \quad< +\infty,\nonumber
\end{eqnarray}
which is (i).

Combining \eqref{energy1} and \eqref{eq:th1_16}, we have
\begin{eqnarray*}
  \|\hat{\lambda}_k-\lambda^*\| \leq  \sqrt{2\mathcal{E}_{k}}\leq \sqrt{2\mathcal{E}_{1}}, \qquad \forall k\geq 1.
\end{eqnarray*}
This yields
\begin{eqnarray}\label{eq_new1}
  \left\|\sum^K_{k=1}(\hat{\lambda}_{k+1}-\hat{\lambda}_{k})\right\| &=&   \left\|\hat{\lambda}_{K+1}-\hat{\lambda}_{1}\right\| \nonumber \\
  &\leq& \left\|\hat{\lambda}_{K+1}-{\lambda}^*\right\|+\left\|\hat{\lambda}_{1}-{\lambda}^*\right\|\\
  &\leq& 2\sqrt{2\mathcal{E}_{1}}\nonumber
\end{eqnarray}
for all $K\geq 1$.
It follows form \eqref{eq:th1_5} and Step 3 of Algorithm \ref{al:al1} that
\begin{eqnarray*}
	 \left\|\sum^K_{k=1}(\hat{\lambda}_{k+1}-\hat{\lambda}_{k})\right\| &=& \left\|\sum^K_{k=1}\frac{k+\alpha-2}{\alpha-1}(\lambda_{k+1}-\bar{\lambda}_k)\right\|\\
	 &=& \frac{s}{\alpha-1}\left\|\sum^K_{k=1}k(Ax_{k+1}-b+\frac{k-1}{\alpha-1}A(x_{k+1}-x_k))\right\|\\
	 & =&\frac{s}{(\alpha-1)^2}\left\|\sum^K_{k=1}\left(k(k+\alpha-2)(Ax_{k+1}-b) -k(k-1)(Ax_{k}-b)    \right)\right\|\\
	 & =&\frac{s}{(\alpha-1)^2}\left\|K(K+\alpha-2)(Ax_{K+1}-b)+ \sum^K_{k=1}(\alpha-3)(k-1)(Ax_{k}-b)\right\|.
\end{eqnarray*}
This together with \eqref{eq_new1} implies
\begin{eqnarray}\label{eq_new2}
	\left\|K(K+\alpha-2)(Ax_{K+1}-b)+ \sum^K_{k=1}\left((\alpha-3)(k-1)(Ax_{k}-b)\right)\right\|	\leq \frac{2(\alpha-1)^2\sqrt{2\mathcal{E}_{1}}}{s}.
\end{eqnarray}

When $\alpha = 3$, it follows from \eqref{eq_new2} that
 \begin{eqnarray*}
 	\|K(K+\alpha-2)(Ax_{K+1}-b)\| \leq  \frac{2(\alpha-1)^2\sqrt{2\mathcal{E}_{1}}}{s}.
 \end{eqnarray*}
When $\alpha>3$: applying  Lemma \ref{le_new3} with $a_k= (\alpha-3)(k-1)(Ax_{k}-b)$, $\tau = \frac{\alpha-2}{\alpha-3}$ and $C = \frac{2(\alpha-1)^2\sqrt{2\mathcal{E}_{1}}}{s}$, from \eqref{eq_new2}, we obtain
\[\sum^K_{k=1}\left((\alpha-3)(k-1)(Ax_{k}-b)\right) \leq \frac{2(\alpha-1)^2\sqrt{2\mathcal{E}_{1}}}{s},\]
which together with \eqref{eq_new2} yields
\[ \|K(K+\alpha-2)(Ax_{K+1}-b)\|\leq \frac{4(\alpha-1)^2\sqrt{2\mathcal{E}_{1}}}{s}.\]
From above discussion, when $\alpha\geq 3$, we have
\begin{eqnarray}\label{eq_new3}
	\|Ax_{k}-b\| \leq  \frac{4(\alpha-1)^2\sqrt{2\mathcal{E}_{1}}}{s(k-1)(k+\alpha-3)}, \quad \forall k>1.
\end{eqnarray}
It follows form the definition of $\mathcal{E}_{k}$ and \eqref{eq:th1_16} that
\begin{eqnarray*}
	\mathcal{L}(x_k,\lambda^*)-\mathcal{L}(x^*,\lambda^*) \leq \frac{(\alpha-1)^2\mathcal{E}_{k}}{s(k^2-k)}\leq  \frac{(\alpha-1)^2\mathcal{E}_{1}}{s(k^2-k)}
\end{eqnarray*}
for all $k>1$. This together with \eqref{eq_new3} implies that
\begin{eqnarray*}
	|F(x_k)-F(x^*)| &=& |\mathcal{L}(x_k,\lambda^*)-\mathcal{L}(x^*,\lambda^*)-\langle \lambda^*, Ax_k-b\rangle|\\
	&\leq & \mathcal{L}(x_k,\lambda^*)-\mathcal{L}(x^*,\lambda^*) +\|\lambda^*\|\|Ax_k-b\|\\
	&\leq &  \frac{(\alpha-1)^2\mathcal{E}_{1}}{s(k^2-k)}+ \frac{4(\alpha-1)^2\sqrt{2\mathcal{E}_{1}}\|\lambda^*\|}{s(k-1)(k+\alpha-3)}
\end{eqnarray*}
for all $k> 1$.
The proof  is complete.
\end{proof}

To investigate the convergence of  Algorithm \ref{al:al2}, we  need the following assumption.

{\bf Assumption (H):} $\Omega\neq\emptyset$, $f$ is a closed convex function, and $g$ is a convex  smooth function  and has a Lipschitz continuous gradient with constant $L_g> 0$, i.e.,
\begin{equation*}\label{eq:ass_1}
	\|\nabla g(x)-\nabla g(y)\|\leq L_g\|x-y\|,\qquad \forall x,y\in\mathbb{R}^n,
\end{equation*}
equivalently,
\begin{equation}\label{eq:ass_2}
	g(x)\leq g(y) + \langle \nabla g(y),x-y\rangle+\frac{L_g}{2}\|x-y\|^2,\qquad \forall x,y\in\mathbb{R}^n.
\end{equation}

\begin{lemma}\label{pro:al2}
	Let $\{(x_k,\lambda_k,\bar{x}_k)\}_{k\geq 1}$ the  sequence generated by Algorithm \ref{al:al2}. Then
\begin{eqnarray}\label{sq:al2_1}
	  \frac{k+\alpha-2}{k} M_k({x_{k+1}-\bar{x}_k})\in  - s\left(\partial f(x_{k+1})+\nabla g(\bar{x}_k)+ A^T(\lambda_{k+1}+\frac{k-1}{\alpha-1}(\lambda_{k+1}-\lambda_{k}))-\epsilon_k\right).
\end{eqnarray}
\end{lemma}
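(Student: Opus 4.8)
The plan is to mirror the proof of Lemma~\ref{pro:al1}, since Algorithm~\ref{al:al2} differs from Algorithm~\ref{al:al1} only in that the $x$-subproblem replaces $F$ by the nonsmooth part $f$ and carries the extra linear term $\langle \nabla g(\bar{x}_k)-\epsilon_k,\,x\rangle$, while Step~1, Step~3, and the auxiliary quantities $\hat{\lambda}_k$ and $\eta_k$ are literally identical in the two methods. So the whole argument reduces to tracking these two additional fixed vectors through the optimality condition and the dual algebra.

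First I would write the first-order optimality condition for the $x$-update in Step~2 of Algorithm~\ref{al:al2}. Because $f$ is convex and all remaining summands are differentiable on $\mathbb{R}^n$ (a quadratic in $x$ plus a linear term), the subdifferential sum rule gives
\[
0\in \partial f(x_{k+1})+\frac{k+\alpha-2}{sk}M_k(x_{k+1}-\bar{x}_k)+\frac{sk(k+\alpha-2)}{(\alpha-1)^2}A^T(Ax_{k+1}-\eta_k)+\nabla g(\bar{x}_k)+A^T\hat{\lambda}_k-\epsilon_k .
\]
Isolating $\frac{k+\alpha-2}{k}M_k(x_{k+1}-\bar{x}_k)$ and factoring out $-s$ yields
\[
\frac{k+\alpha-2}{k}M_k(x_{k+1}-\bar{x}_k)\in -s\Big(\partial f(x_{k+1})+\nabla g(\bar{x}_k)+A^T\big(\tfrac{sk(k+\alpha-2)}{(\alpha-1)^2}(Ax_{k+1}-\eta_k)+\hat{\lambda}_k\big)-\epsilon_k\Big),
\]
which is the exact analogue of \eqref{eq:pro1_1}.

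Next I would simplify the dual expression inside the bracket. Since the definitions of $\eta_k$ and $\hat{\lambda}_k$ in Step~2 and the $\lambda$-update in Step~3 coincide with those of Algorithm~\ref{al:al1}, the chain of equalities established in the proof of Lemma~\ref{pro:al1} applies verbatim and gives
\[
\frac{sk(k+\alpha-2)}{(\alpha-1)^2}(Ax_{k+1}-\eta_k)+\hat{\lambda}_k=\lambda_{k+1}+\frac{k-1}{\alpha-1}(\lambda_{k+1}-\lambda_k).
\]
Substituting this identity into the previous inclusion produces \eqref{sq:al2_1}, completing the proof.

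There is no genuine obstacle here: the argument is a direct transcription of Lemma~\ref{pro:al1}, the only new feature being that $\nabla g(\bar{x}_k)$ and the perturbation $-\epsilon_k$ are treated as fixed vectors and simply carried along. The single point deserving a line of justification is the use of the subdifferential sum rule for the nonsmooth subproblem, which is valid precisely because every summand other than $f$ is differentiable on all of $\mathbb{R}^n$, so no constraint-qualification issue arises.
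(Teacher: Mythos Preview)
Your proposal is correct and follows essentially the same approach as the paper: write the optimality condition for the $x$-subproblem, rearrange to isolate $\frac{k+\alpha-2}{k}M_k(x_{k+1}-\bar{x}_k)$, and then reuse the dual-update algebra from Lemma~\ref{pro:al1} verbatim to simplify the bracketed term. In fact the paper is even terser, merely stating that ``the rest of the proof is similar as the one of Lemma~\ref{pro:al1}, and so we omit it,'' whereas you spell out the substitution and add a brief justification of the subdifferential sum rule.
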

\begin{proof}
 From Step 2 of Algorithm \ref{al:al2} , we have
 	 \[ 0\in \partial f(x_{k+1})+\nabla g(\bar{x}_k)+ \frac{k+\alpha-2}{sk}M_k(x_{k+1}-\bar{x}_k)+\frac{sk(k+\alpha-2)}{(\alpha-1)^2} A^T(Ax_{k+1}-\eta_k)+A^T\hat{\lambda}_k-\epsilon_k ,\]
which yields
 \begin{equation*}
 	 \frac{k+\alpha-2}{k}M_k(x_{k+1}-\bar{x}_k) \in -s\left(\partial f(x_{k+1})+\nabla g(\bar{x}_k) +A^T\left( \frac{sk(k+\alpha-2)}{(\alpha-1)^2} (Ax_{k+1}-\eta_k)+\hat{\lambda}_k\right)-\epsilon_k\right).
 \end{equation*}
The rest of the proof is similar as the one of Lemma \ref{pro:al1}, and so we omit it.
  \end{proof}

\begin{lemma}\label{le_new5}
	Assume that Assumption (H) holds, and $M_{k-1}\succcurlyeq M_k\succcurlyeq sL_g Id $.  Let $\{(x_k,\lambda_k, \hat{\lambda}_k)\}_{k\geq 1}$ be the sequence generated by Algorithm \ref{al:al2} and $(x^*,\lambda^*)\in\Omega$. Define
\begin{equation}\label{eq:th3_1}
	\mathcal{E}^{\epsilon}_k = \mathcal{E}_k-\sum^{k}_{j=1} \frac{s(j-1)}{\alpha-1} \langle \hat{x}_j-x^*, \epsilon_{j-1}\rangle,
\end{equation}
where $\mathcal{E}_k$ is defined in \eqref{energy1} and $\hat{x}_k$ is defined  in \eqref{hat_x}. Then, for any $k\geq 1$,
\begin{eqnarray*}
	\mathcal{E}^{\epsilon}_{k+1} &\leq& \mathcal{E}^{\epsilon}_k.
\end{eqnarray*}
\end{lemma}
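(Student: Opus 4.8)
The plan is to rerun the argument of Lemma \ref{le_new2} almost verbatim, tracking the only two differences in Algorithm \ref{al:al2}: the term $\nabla g(\bar x_k)$ in the inclusion \eqref{sq:al2_1} (in place of a full subgradient of $F$ at $x_{k+1}$), and the perturbation $\epsilon_k$. First, starting from \eqref{sq:al2_1} and writing $\lambda_{k+1}+\frac{k-1}{\alpha-1}(\lambda_{k+1}-\lambda_k)=(\hat\lambda_{k+1}-\lambda^*)+\lambda^*$ via \eqref{eq:th1_6}, I would set
\[\xi_k:=-\frac{k+\alpha-2}{sk}M_k(x_{k+1}-\bar x_k)-\nabla g(\bar x_k)-A^T(\hat\lambda_{k+1}-\lambda^*)+\epsilon_k,\]
so that $\xi_k\in\partial f(x_{k+1})+A^T\lambda^*$. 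The identities \eqref{eq:th1_3}--\eqref{eq:th1_6} for $\hat x_{k+1},\hat\lambda_{k+1}$ and the identity in \eqref{eq:th1_12}'s derivation all remain valid, since Steps 1 and 3 of Algorithm \ref{al:al2} coincide with those of Algorithm \ref{al:al1}.

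Next I would reproduce \eqref{eq:th1_8}, now substituting $M_k(x_{k+1}-\bar x_k)=-\frac{sk}{k+\alpha-2}\big(\xi_k+\nabla g(\bar x_k)+A^T(\hat\lambda_{k+1}-\lambda^*)-\epsilon_k\big)$. The cross term $\langle\hat x_{k+1}-x^*,\xi_k+\nabla g(\bar x_k)\rangle$ is where the work lies: splitting $\hat x_{k+1}-x^*$ by \eqref{eq:th1_4}, using $\xi_k\in\partial f(x_{k+1})+A^T\lambda^*$ and convexity of $f$, and using convexity of $g$ to trade $\langle\nabla g(\bar x_k),x_{k+1}-x^*\rangle$ and $\langle\nabla g(\bar x_k),x_{k+1}-x_k\rangle$ against $g(x_{k+1})-g(x^*)$ and $g(x_{k+1})-g(x_k)$ up to the linearization error, which the descent inequality \eqref{eq:ass_2} bounds by $g(x_{k+1})-g(\bar x_k)-\langle\nabla g(\bar x_k),x_{k+1}-\bar x_k\rangle\le\frac{L_g}{2}\|x_{k+1}-\bar x_k\|^2$. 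This produces
\[\langle\hat x_{k+1}-x^*,\xi_k+\nabla g(\bar x_k)\rangle\ge(\mathcal{L}(x_{k+1},\lambda^*)-\mathcal{L}(x^*,\lambda^*))+\frac{k-1}{\alpha-1}(\mathcal{L}(x_{k+1},\lambda^*)-\mathcal{L}(x_k,\lambda^*))-\frac{L_g(k+\alpha-2)}{2(\alpha-1)}\|x_{k+1}-\bar x_k\|^2,\]
i.e. the counterpart of \eqref{eq:th1_9}--\eqref{eq:th1_10}, now carrying an extra error $\frac{skL_g(k+\alpha-2)}{2(\alpha-1)^2}\|x_{k+1}-\bar x_k\|^2$ and the extra term $\frac{sk}{\alpha-1}\langle\hat x_{k+1}-x^*,\epsilon_k\rangle$. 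The $\lambda$-block \eqref{eq:th1_12} is unchanged, and the cross terms $\pm\frac{sk}{\alpha-1}\langle\hat x_{k+1}-x^*,A^T(\hat\lambda_{k+1}-\lambda^*)\rangle$ cancel exactly as before.

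Adding the two blocks and forming $\mathcal{E}_{k+1}-\mathcal{E}_k$ as in \eqref{eq:th1_14} then gives
\[\mathcal{E}_{k+1}-\mathcal{E}_k\le\frac{(3-\alpha)sk}{(\alpha-1)^2}(\mathcal{L}(x_{k+1},\lambda^*)-\mathcal{L}(x^*,\lambda^*))+\frac{sk}{\alpha-1}\langle\hat x_{k+1}-x^*,\epsilon_k\rangle-\delta_k-\frac{(k+\alpha-2)^2}{2(\alpha-1)^2}\|\lambda_{k+1}-\bar\lambda_k\|^2,\]
where $\delta_k:=\frac{(k+\alpha-2)^2}{2(\alpha-1)^2}\|x_{k+1}-\bar x_k\|_{M_k}^2-\frac{skL_g(k+\alpha-2)}{2(\alpha-1)^2}\|x_{k+1}-\bar x_k\|^2$. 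Since $M_k\succcurlyeq sL_g Id$ yields $\|x_{k+1}-\bar x_k\|_{M_k}^2\ge sL_g\|x_{k+1}-\bar x_k\|^2$, and $k+\alpha-2\ge k$, we get $\delta_k\ge0$; together with $\alpha\ge3$ and $(x^*,\lambda^*)\in\Omega$ (so $\mathcal{L}(x_{k+1},\lambda^*)\ge\mathcal{L}(x^*,\lambda^*)$), every term on the right except the $\epsilon_k$ term is nonpositive, hence $\mathcal{E}_{k+1}-\mathcal{E}_k\le\frac{sk}{\alpha-1}\langle\hat x_{k+1}-x^*,\epsilon_k\rangle$. Because the $j=k+1$ summand in \eqref{eq:th3_1} is precisely $\frac{sk}{\alpha-1}\langle\hat x_{k+1}-x^*,\epsilon_k\rangle$, we have $\mathcal{E}^{\epsilon}_{k+1}-\mathcal{E}^{\epsilon}_k=(\mathcal{E}_{k+1}-\mathcal{E}_k)-\frac{sk}{\alpha-1}\langle\hat x_{k+1}-x^*,\epsilon_k\rangle\le0$, which is the claim. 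The main obstacle is the middle step---absorbing the $g$-linearization error into the $M_k$-weighted proximal term---and this is exactly what the hypothesis $M_k\succcurlyeq sL_g Id$ is designed for; the remainder is a faithful transcription of the proof of Lemma \ref{le_new2}, with the modified Lyapunov function \eqref{eq:th3_1} engineered so that the residual $\epsilon_k$-term telescopes away.
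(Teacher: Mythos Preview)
Your proposal is correct and follows essentially the same route as the paper's proof: the paper defines $\mathcal{L}^f(x)=f(x)+\langle\lambda^*,Ax-b\rangle$ and shows $\xi_k\in\partial\mathcal{L}^f(x_{k+1})$ (your $\partial f(x_{k+1})+A^T\lambda^*$), then combines the convexity bound on $\xi_k$ with the descent/convexity estimates \eqref{eq:th2_9}--\eqref{eq:th2_11} on $\nabla g(\bar x_k)$ to obtain exactly your displayed lower bound on $\langle\hat x_{k+1}-x^*,\xi_k+\nabla g(\bar x_k)\rangle$, and absorbs the linearization error into the $M_k$-term via $M_k\succcurlyeq sL_g Id\succcurlyeq \frac{sL_gk}{k+\alpha-2}Id$, which is your $\delta_k\ge0$ step. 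The remaining cancellation with \eqref{eq:th1_12} and the telescoping of the $\epsilon_k$-term into $\mathcal{E}^\epsilon_k$ are identical.
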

\begin{proof}
	
 By same arguments as in the proof of Lemma \ref{le_new2}, we get
\begin{eqnarray}
	&& \hat{x}_{k+1}-\hat{x}_k = \frac{k+\alpha-2}{\alpha-1}(x_{k+1}-\bar{x}_k),\label{eq:th2_2}\\
	&& \hat{x}_{k+1}-x^* = x_{k+1}-x^*+\frac{k-1}{\alpha-1}(x_{k+1}-x_k),\label{eq:th2_3}\\
	&& \hat{\lambda}_{k+1}-\hat{\lambda}_k=\frac{k+\alpha-2}{\alpha-1}(\lambda_{k+1}-\bar{\lambda}_k),\label{eq:th2_4}\\
	&& \hat{\lambda}_{k+1}-\lambda^* = \lambda_{k+1}-\lambda^*+\frac{k-1}{\alpha-1}(\lambda_{k+1}-\lambda_k).\label{eq:th2_5}
\end{eqnarray}
For notation simplicity, we denote
\begin{equation}
	\mathcal{L}^{f}(x) = f(x)+\langle \lambda^*,Ax-b\rangle.
\end{equation}
Then  $\mathcal{L}^{f}$ is a convex function, $\partial \mathcal{L}^{f}(x) = \partial f(x)+A^T\lambda^*$, and
\begin{equation}\label{eq:th2_extra1}
\mathcal{L}(x,\lambda^*) = \mathcal{L}^{f}(x)+g(x).
\end{equation}
It follows from \eqref{sq:al2_1} and \eqref{eq:th2_5} that
\begin{eqnarray*}
	  \frac{k+\alpha-2}{k} M_k(x_{k+1}-\bar{x}_k)&\in& - s(\partial f(x_{k+1})+A^T\lambda^*)-s\nabla g(\bar{x}_k)\\
	&&-s A^T(\lambda_{k+1}-\lambda^*+\frac{k-1}{\alpha-1}(\lambda_{k+1}-\lambda_{k}))+s\epsilon_k \\
	& =&  - s\partial \mathcal{L}^{f}(x_{k+1})-s\nabla g(\bar{x}_k)-sA^T(\hat{\lambda}_{k+1}-\lambda^*)+s\epsilon_k,
\end{eqnarray*}
which yields
\begin{equation}\label{eq:th2_6}
	\xi_k :=-\frac{k+\alpha-2}{ks}M_k(x_{k+1}-\bar{x}_k)-\nabla g(\bar{x}_k)- A^T(\hat{\lambda}_{k+1}-\lambda^*)+\epsilon_k \in \partial\mathcal{L}^{f}(x_{k+1}).
\end{equation}
Since $M_{k-1}\succcurlyeq M_k$, it follows from \eqref{eq:known_2}, \eqref{eq:th2_2} and \eqref{eq:th2_6} that
\begin{eqnarray}\label{eq:th2_7}
	&&\frac{1}{2}\|\hat{x}_{k+1}-x^*\|_{M_k}^2-\frac{1}{2}\|\hat{x}_k-x^*\|_{M_{k-1}}^2\leq \langle \hat{x}_{k+1}-x^*, M_k(\hat{x}_{k+1}-\hat{x}_k)\rangle-\frac{1}{2}\|\hat{x}_{k+1}-\hat{x}_k\|^2_{M_k} \nonumber \\
	&&\qquad =  -\frac{sk}{\alpha-1}(\langle \hat{x}_{k+1}-x^*,\xi_k\rangle+\langle \hat{x}_{k+1}-x^*,A^T(\hat{\lambda}_{k+1}-\lambda^*)\rangle\\
	&&\qquad\quad  +\langle \hat{x}_{k+1}-x^*,\nabla g(\bar{x}_k)\rangle-\langle \hat{x}_{k+1}-x^*,\epsilon_k\rangle) -\frac{(k+\alpha-2)^2}{2(\alpha-1)^2}\|x_{k+1}-\bar{x}_k\|_{M_k}^2.\nonumber
\end{eqnarray}
From \eqref{eq:th2_3} and \eqref{eq:th2_6}, we have
\begin{eqnarray}\label{eq:th2_8}
	\langle \hat{x}_{k+1}-x^*, \xi_k \rangle& =&\langle x_{k+1}-x^*, \xi_k\rangle +\frac{k-1}{\alpha-1}\langle x_{k+1}-x_k, \xi_k\rangle\nonumber\\
	&\geq&  \mathcal{L}^f(x_{k+1})-\mathcal{L}^f(x^*)+\frac{k-1}{\alpha-1}(\mathcal{L}^f(x_{k+1})-\mathcal{L}^f(x_{k})),
\end{eqnarray}
where the inequality follows from the convexity of $\mathcal{L}^f$.  Since $g$ has a Lipschitz continuous gradient, from \eqref{eq:ass_2} we get
\begin{equation}\label{eq:th2_9}
	g(x_{k+1})\leq g(\bar{x}_k)+\langle \nabla g(\bar{x}_k),x_{k+1}-\bar{x}_k\rangle+\frac{L_g}{2}\|x_{k+1}-\bar{x}_k\|^2.
\end{equation}
By the convexity of $g$, we have
\begin{eqnarray}\label{eq:th2_10}
	\langle\nabla g(\bar{x}_k),x_{k+1}-\bar{x}_k\rangle &=& \langle\nabla g(\bar{x}_k),x_{k+1}-x^*\rangle+\langle\nabla g(\bar{x}_k),x^*-\bar{x}_k\rangle\nonumber\\
	&\leq&\langle\nabla g(\bar{x}_k),x_{k+1}-x^*\rangle + g(x^*)-g(\bar{x}_k)
\end{eqnarray}
and
\begin{eqnarray}\label{eq:th2_11}
\langle\nabla g(\bar{x}_k),x_{k+1}-\bar{x}_k\rangle &=& \langle\nabla g(\bar{x}_k),x_{k+1}-x_k\rangle+\langle\nabla g(\bar{x}_k),x_k-\bar{x}_k\rangle\nonumber\\
	&\leq&\langle\nabla g(\bar{x}_k),x_{k+1}-x_k\rangle + g(x_k)-g(\bar{x}_k).
\end{eqnarray}
It follows from  \eqref{eq:th2_9}-\eqref{eq:th2_11} that
\[\langle\nabla g(\bar{x}_k),x_{k+1}-x^*\rangle\geq g(x_{k+1})-g(x^*)-\frac{L_g}{2}\|x_{k+1}-\bar{x}_k\|^2,\]
and
\[\langle\nabla g(\bar{x}_k),x_{k+1}-x_k\rangle\geq g(x_{k+1})-g(x_k)-\frac{L_g}{2}\|x_{k+1}-\bar{x}_k\|^2.\]
This together with \eqref{eq:th2_3} yields
\begin{eqnarray}\label{eq:th2_12}
	\langle \hat{x}_{k+1}-x^*,\nabla g(\bar{x}_k) \rangle &=& \langle\nabla g(\bar{x}_k),x_{k+1}-x^*\rangle+\frac{k-1}{\alpha-1}\langle\nabla g(\bar{x}_k),x_{k+1}-x_k\rangle\nonumber\\
	& \geq& g(x_{k+1})-g(x^*)+\frac{k-1}{\alpha-1}(g(x_{k+1})-g(x_k))\\
	&&-\frac{(k+\alpha-2)L_g}{2(\alpha-1)}\|x_{k+1}-\bar{x}_k\|^2.\nonumber
\end{eqnarray}
It follows from  \eqref{eq:th2_7},\eqref{eq:th2_8} and \eqref{eq:th2_12} that
\begin{eqnarray}\label{eq:th2_13}
	&&\frac{1}{2}\|\hat{x}_{k+1}-x^*\|_{M_k}^2-\frac{1}{2}\|\hat{x}_k-x^*\|_{M_{k-1}}^2\nonumber\\
	&&\qquad\qquad \leq -\frac{sk}{\alpha-1}(\mathcal{L}^f(x_{k+1})+g(x_{k+1})-(\mathcal{L}^f(x^*)+g(x^*)))\nonumber \\
	&&\qquad\qquad\quad  -  \frac{sk(k-1)}{(\alpha-1)^2}(\mathcal{L}^f(x_{k+1})+g(x_{k+1})-(\mathcal{L}^f(x_{k})+g(x_k)))\nonumber\\
	&&\qquad\qquad\quad -\frac{sk}{\alpha-1}\langle \hat{x}_{k+1}-x^*,A^T(\hat{\lambda}_{k+1}-\lambda^*)\rangle+ \frac{sk}{\alpha-1}\langle \hat{x}_{k+1}-x^*,\epsilon_k \rangle\\
	&&\qquad\qquad\quad-\frac{k+\alpha-2}{2(\alpha-1)^2}\|x_{k+1}-\bar{x}_k\|_{(k+\alpha-2)M_k-sL_gkId}^2 \nonumber\\
	&&\qquad\qquad \leq -\frac{sk}{\alpha-1}(\mathcal{L}(x_{k+1},\lambda^*)-\mathcal{L}(x^*,\lambda^*))- \frac{sk(k-1)}{(\alpha-1)^2}(\mathcal{L}(x_{k+1},\lambda^*)-\mathcal{L}(x_k,\lambda^*))\nonumber\\
	&&\qquad\qquad\quad -\frac{sk}{\alpha-1}\langle \hat{x}_{k+1}-x^*,A^T(\hat{\lambda}_{k+1}-\lambda^*)\rangle+\frac{sk}{\alpha-1}\langle \hat{x}_{k+1}-x^*,\epsilon_k \rangle, \nonumber
\end{eqnarray}
where the second inequality follows from the assumption $M_k\succcurlyeq sL_g Id\succcurlyeq \frac{sL_gk}{k+\alpha-2}Id$.

It follows from  \eqref{eq:th1_12}, \eqref{eq:th3_1} and \eqref{eq:th2_13}  that
\begin{eqnarray}\label{eq:th2_16}
	\mathcal{E}^{\epsilon}_{k+1}-\mathcal{E}^{\epsilon}_k&=& \mathcal{E}_{k+1}-\mathcal{E}_k- \frac{sk}{\alpha-1} \langle \hat{x}_{k+1}-x^*, \epsilon_k\rangle\nonumber \\
	& \leq& \frac{(3-\alpha)sk}{(\alpha-1)^2}(\mathcal{L}(x_{k+1},\lambda^*)-\mathcal{L}(x^*,\lambda^*))  -\frac{(k+\alpha-2)^2}{2(\alpha-1)^2}\|\lambda_{k+1}-\bar{\lambda}_k\|^2\nonumber \\
	& \leq& 0,
\end{eqnarray}
where the last inequality follows from $\alpha\geq 3$ and $(x^*,\lambda^*)\in\Omega$.
\end{proof}

To analyze the convergence of Algorithm \ref{al:al2}, we need the following discrete version of the Gronwall-Bellman lemma.

\begin{lemma}\cite[Lemma 5.14]{AttouchCPR2018} \label{le:Gronwall_Bellman}
	Let $\{a_k\}_{k\geq 1}$ and $\{b_k\}_{k\geq 1}$ be two nonnegative sequences  such that $\sum^{+\infty}_k b_k< +\infty$ and
	\[ a_k^2 \leq c^2 +\sum^{k}_{j=1}b_j a_j \]
	for all $k\geq 1$, where $c\geq 0$. Then
	\[a_k\leq c+\sum^{+\infty}_{j=1} b_j\]
	for all  $k\geq 1$.
\end{lemma}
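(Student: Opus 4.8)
The plan is to mimic, in discrete form, the standard proof of the continuous Gronwall--Bellman inequality, the device being to telescope a square root. First I would introduce the partial sums $A_0 := c^2$ and $A_k := c^2 + \sum_{j=1}^{k} b_j a_j$ for $k \ge 1$, so that the hypothesis becomes $a_k^2 \le A_k$, hence $a_k \le \sqrt{A_k}$ (using $a_k \ge 0$). Since each $b_j a_j \ge 0$, the sequence $\{A_k\}_{k\ge 0}$ is nondecreasing, and from the very definition $A_k - A_{k-1} = b_k a_k \le b_k \sqrt{A_k}$ for every $k \ge 1$.

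The heart of the argument is then an increment estimate for $\sqrt{A_k}$. Writing $A_k - A_{k-1} = (\sqrt{A_k} - \sqrt{A_{k-1}})(\sqrt{A_k} + \sqrt{A_{k-1}})$ and dividing by $\sqrt{A_k} + \sqrt{A_{k-1}}$, which is no smaller than $\sqrt{A_k}$, I get $\sqrt{A_k} - \sqrt{A_{k-1}} \le b_k \sqrt{A_k}/(\sqrt{A_k} + \sqrt{A_{k-1}}) \le b_k$. Telescoping over $j = 1, \dots, k$ yields $\sqrt{A_k} \le \sqrt{A_0} + \sum_{j=1}^{k} b_j = c + \sum_{j=1}^{k} b_j \le c + \sum_{j=1}^{+\infty} b_j$, and combining with $a_k \le \sqrt{A_k}$ gives the desired bound $a_k \le c + \sum_{j=1}^{+\infty} b_j$; here the summability assumption $\sum_k b_k < +\infty$ is exactly what makes the right-hand side finite.

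The only real obstacle is the potential degeneracy $A_k = 0$ (which forces $c = 0$ and $b_j a_j = 0$ for all $j \le k$), in which case the division by $\sqrt{A_k} + \sqrt{A_{k-1}}$ above is not justified. I would handle this, uniformly in all cases, by first running the whole argument with $A_k$ replaced by $A_k^{\delta} := c^2 + \delta + \sum_{j=1}^{k} b_j a_j$ for an arbitrary fixed $\delta > 0$: then $A_k^{\delta} \ge \delta > 0$ so every step above is legitimate, and one arrives at $a_k \le \sqrt{c^2 + \delta} + \sum_{j=1}^{+\infty} b_j$ for all $k \ge 1$. Letting $\delta \to 0^{+}$ then produces $a_k \le c + \sum_{j=1}^{+\infty} b_j$, completing the proof. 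Since this perturbed argument already covers $c = 0$, no separate case analysis is needed.
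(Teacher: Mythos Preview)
Your argument is correct: the square-root telescoping device together with the $\delta$-perturbation to avoid division by zero is exactly the standard proof of this discrete Gronwall--Bellman inequality. Note, however, that the paper does not actually supply a proof of this lemma; it merely quotes the result from \cite[Lemma~5.14]{AttouchCPR2018} and uses it as a black box in the proof of Theorem~\ref{th:cov_al3}. So there is no ``paper's own proof'' to compare against---your write-up simply fills in what the authors left to the cited reference, and the approach you give is the same one found there.
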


\begin{theorem}\label{th:cov_al3}
	Assume that Assumption (H) holds,  $M_{k-1}\succcurlyeq M_k\succcurlyeq sL_gId$  for all $k\geq 1$, and
\[\sum^{+\infty}_{k=1}k\|\epsilon_k\|< +\infty. \]
Let $\{(x_k,\lambda_k)\}_{k\geq 1}$ be the sequence generated by Algorithm \ref{al:al2} and $(x^*,\lambda^*)\in\Omega$. Then for all $k> 1$,
\begin{eqnarray*}
	&& \|Ax_{k}-b\| \leq  \frac{4(\alpha-1)^2\sqrt{2C}}{s(k-1)(k+\alpha-3)},\\
	&& |f(x_k)+g(x_k)-f(x^*)-g(x^*)| \leq   \frac{(\alpha-1)^2C}{s(k^2-k)}+\frac{4(\alpha-1)^2\sqrt{2C}\|\lambda^*\|}{s(k-1)(k+\alpha-3)},
\end{eqnarray*}
where
\[ C: =  \frac{1}{2}\|x_1-x^*\|_{M_0}^2+\frac{1}{2}\|\lambda_1-\lambda^*\|^2+\frac{s}{\alpha-1}\left(\sqrt{\frac{2\mathcal{E}_{1}}{sL_g}}+\frac{2}{(\alpha-1)L_g}\sum^{+\infty}_{j=1}j\|\epsilon_{j}\|\right )\times\sum^{+\infty}_{j=1}j\|\epsilon_{j}\|.\]
\end{theorem}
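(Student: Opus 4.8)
The plan is to follow the template of the proof of Theorem~\ref{th:cov_al1}, the only genuinely new work being to absorb the perturbation terms carried by the modified energy $\mathcal{E}^{\epsilon}_k$ into a uniform bound $\mathcal{E}_k \le C$; once that bound is available, every remaining step is a verbatim copy of the corresponding step for Algorithm~\ref{al:al1}.

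First I would establish the uniform bound on $\mathcal{E}_k$. By Lemma~\ref{le_new5} the sequence $\{\mathcal{E}^{\epsilon}_k\}$ is non-increasing, so $\mathcal{E}^{\epsilon}_k \le \mathcal{E}^{\epsilon}_1 = \mathcal{E}_1$ (recall $\epsilon_0 = 0$); unwinding \eqref{eq:th3_1} and applying Cauchy--Schwarz gives
\[ \mathcal{E}_k \le \mathcal{E}_1 + \frac{s}{\alpha-1}\sum_{j=1}^{k}(j-1)\,\|\hat{x}_j - x^*\|\,\|\epsilon_{j-1}\|. \]
The obstacle here is the circular dependence between $\|\hat{x}_j - x^*\|$ and $\mathcal{E}_j$. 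From the definition \eqref{energy1}, the assumption $(x^*,\lambda^*)\in\Omega$ makes each of the three summands of $\mathcal{E}_j$ nonnegative, and together with $M_{j-1} \succcurlyeq sL_g Id$ this yields $\tfrac{sL_g}{2}\|\hat{x}_j - x^*\|^2 \le \mathcal{E}_j$, i.e. $\|\hat{x}_j - x^*\| \le \sqrt{2\mathcal{E}_j/(sL_g)}$. Plugging this in, multiplying by $2$, and setting $a_j := \sqrt{2\mathcal{E}_j}$ and $b_j := \tfrac{2s(j-1)}{(\alpha-1)\sqrt{sL_g}}\|\epsilon_{j-1}\|$, I get $a_k^2 \le 2\mathcal{E}_1 + \sum_{j=1}^k b_j a_j$ with $\sum_{j\ge1} b_j < +\infty$ — this is exactly where the hypothesis $\sum_k k\|\epsilon_k\| < +\infty$ enters. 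Lemma~\ref{le:Gronwall_Bellman} then delivers $a_j \le \sqrt{2\mathcal{E}_1} + \sum_{l\ge1} b_l$, hence the a priori bound $\|\hat{x}_j - x^*\| \le \sqrt{2\mathcal{E}_1/(sL_g)} + \tfrac{2}{(\alpha-1)L_g}\sum_{l\ge1}l\|\epsilon_l\|$. Substituting this back into the displayed inequality, using $\sum_{j=1}^k(j-1)\|\epsilon_{j-1}\| \le \sum_{l\ge1}l\|\epsilon_l\|$ and $\mathcal{E}_1 = \tfrac12\|x_1-x^*\|_{M_0}^2 + \tfrac12\|\lambda_1-\lambda^*\|^2$, produces exactly $\mathcal{E}_k \le C$ for all $k\ge1$.

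With $\mathcal{E}_k \le C$ in hand, the rest parallels Theorem~\ref{th:cov_al1}. The inequality $\tfrac{s(k^2-k)}{(\alpha-1)^2}\big(\mathcal{L}(x_k,\lambda^*)-\mathcal{L}(x^*,\lambda^*)\big) \le \mathcal{E}_k \le C$ gives the $\mathcal{O}(1/k^2)$ decay of the Lagrangian residual. From $\tfrac12\|\hat{\lambda}_k-\lambda^*\|^2 \le \mathcal{E}_k \le C$ I get $\|\sum_{k=1}^K(\hat{\lambda}_{k+1}-\hat{\lambda}_k)\| = \|\hat{\lambda}_{K+1}-\hat{\lambda}_1\| \le 2\sqrt{2C}$; expanding this telescoped sum via \eqref{eq:th2_4} and Step~3 of Algorithm~\ref{al:al2} (which coincides with Step~3 of Algorithm~\ref{al:al1}) and repeating the computation leading to \eqref{eq_new2} gives
\[ \Big\|K(K+\alpha-2)(Ax_{K+1}-b) + \sum_{k=1}^K(\alpha-3)(k-1)(Ax_k-b)\Big\| \le \frac{2(\alpha-1)^2\sqrt{2C}}{s}. \]
For $\alpha = 3$ this is the feasibility estimate outright; for $\alpha > 3$ I apply Lemma~\ref{le_new3} with $a_k = (\alpha-3)(k-1)(Ax_k-b)$ and $\tau = \tfrac{\alpha-2}{\alpha-3}$, losing only a factor $2$. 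Either way $\|Ax_k-b\| \le \tfrac{4(\alpha-1)^2\sqrt{2C}}{s(k-1)(k+\alpha-3)}$ for $k>1$. Finally, since $F = f+g$, writing $F(x_k)-F(x^*) = \mathcal{L}(x_k,\lambda^*)-\mathcal{L}(x^*,\lambda^*) - \langle\lambda^*,Ax_k-b\rangle$ and bounding the last term by $\|\lambda^*\|\,\|Ax_k-b\|$ for the upper estimate, while using the subgradient inequality $F(x_k) \ge F(x^*) - \langle\lambda^*,Ax_k-b\rangle$ (which follows from $-A^T\lambda^*\in\partial F(x^*)$) for the matching lower estimate, yields the claimed bound on $|f(x_k)+g(x_k)-f(x^*)-g(x^*)|$.

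The hard part lies entirely in the first step: closing the Gronwall loop so that the inexactness only perturbs the energy by a bounded, explicitly computable amount. This is precisely why Algorithm~\ref{al:al2} needs the strengthened proximal condition $M_k \succcurlyeq sL_g Id$ (beyond the $M_{k-1}\succcurlyeq M_k$ required for Algorithm~\ref{al:al1}): it is what turns a bound on $\mathcal{E}_j$ into a bound on $\|\hat{x}_j-x^*\|$, which is the missing piece needed to feed the Gronwall--Bellman lemma.
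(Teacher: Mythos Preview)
Your proposal is correct and follows essentially the same approach as the paper: use Lemma~\ref{le_new5} together with $M_{k-1}\succcurlyeq sL_gId$ and the discrete Gronwall--Bellman lemma to obtain the uniform bound $\mathcal{E}_k\le C$, then repeat the proof of Theorem~\ref{th:cov_al1} verbatim with $C$ in place of $\mathcal{E}_1$. The only cosmetic difference is that the paper applies Lemma~\ref{le:Gronwall_Bellman} with $a_k=\|\hat{x}_k-x^*\|$ directly, whereas you apply it with $a_k=\sqrt{2\mathcal{E}_k}$; both choices yield the identical bound on $\|\hat{x}_j-x^*\|$ and hence the same constant $C$.
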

\begin{proof}
From Lemma \ref{le_new5} we have
\[\mathcal{E}^{\epsilon}_{k+1} \leq \mathcal{E}^{\epsilon}_k\leq  \mathcal{E}^{\epsilon}_1,\]
and it yields
 \begin{equation}\label{eq:th3_3}
	\mathcal{E}_{k}\leq \mathcal{E}_{1}+\sum^{k}_{j=1}\frac{s(j-1)}{\alpha-1} \langle \hat{x}_j-x^*, \epsilon_{j-1}\rangle.
\end{equation}
This together with \eqref{energy1} and Cauchy-Schwarz inequality implies
\begin{equation*}\|\hat{x}_k-x^*\|_{M_{k-1}}^2\leq 2\mathcal{E}_{1}+\frac{2s}{\alpha-1}\sum^{k}_{j=1}(j-1) \|\hat{x}_j-x^*\|\cdot\|\epsilon_{j-1}\| .
\end{equation*}
Since $M_{k-1}\succcurlyeq sL_g Id $,
\begin{equation}\label{eqfyp3}
\|\hat{x}_k-x^*\|^2\leq \frac{1}{sL_g} \|\hat{x}_k-x^*\|_{M_{k-1}}^2\leq \frac{2\mathcal{E}_{1}}{sL_g}+\frac{2}{(\alpha-1)L_g}\sum^{k}_{j=1}(j-1) \|\hat{x}_j-x^*\|\cdot\|\epsilon_{j-1}\| .
\end{equation}
Since $\sum^{+\infty}_{j=1} j\|\epsilon_j\|< +\infty$, applying Lemma \ref{le:Gronwall_Bellman} with $a_k=\|\hat{x}_k-x^*\|$ to \eqref{eqfyp3}, we obtain
\begin{equation}\label{eq:th3_4}
	\|\hat{x}_k-x^*\|\leq \sqrt{\frac{2\mathcal{E}_{1}}{sL_g}}+\frac{2}{(\alpha-1)L_g}\sum^{+\infty}_{j=1}j\|\epsilon_{j}\|<+\infty, \quad\forall k\geq 1.
\end{equation}
This together with \eqref{eq:th3_3} yields
\begin{eqnarray}\label{ineqfyp5}
	\mathcal{E}_{k}&\leq & \mathcal{E}_{1}+\frac{s}{\alpha-1}\sup_{k\geq 1}\|\hat{x}_k-x^*\|\times\sum^{+\infty}_{j=1}j\|\epsilon_j\|\nonumber\\
	&\leq& \mathcal{E}_{1}+\frac{s}{\alpha-1}\left(\sqrt{\frac{2\mathcal{E}_{1}}{sL_g}}+\frac{2}{(\alpha-1)L_g}\sum^{+\infty}_{j=1}j\|\epsilon_{j}\|\right )\times\sum^{+\infty}_{j=1}j\|\epsilon_{j}\|
\end{eqnarray}
for any $k\geq 1$.
Denote
\begin{eqnarray*}
	&&C:=  \mathcal{E}_{1}+\frac{s}{\alpha-1}\left(\sqrt{\frac{2\mathcal{E}_{1}}{sL_g}}+\frac{2}{(\alpha-1)L_g}\sum^{+\infty}_{j=1}j\|\epsilon_{j}\|\right )\times\sum^{+\infty}_{j=1}j\|\epsilon_{j}\|\\
	&&\quad\ = \frac{1}{2}\|x_1-x^*\|_{M_0}^2+\frac{1}{2}\|\lambda_1-\lambda^*\|^2+\frac{s}{\alpha-1}\left(\sqrt{\frac{2\mathcal{E}_{1}}{sL_g}}+\frac{2}{(\alpha-1)L_g}\sum^{+\infty}_{j=1}j\|\epsilon_{j}\|\right )\times\sum^{+\infty}_{j=1}j\|\epsilon_{j}\|.
\end{eqnarray*}
 By the definition of $\mathcal{E}_{k}$ and \eqref{ineqfyp5}, we have
\[\frac{s(k^2-k)}{(\alpha-1)^2}(\mathcal{L}(x_k,\lambda^*)-\mathcal{L}(x^*,\lambda^*))\leq \mathcal{E}_{k}\leq C\]
and
\begin{eqnarray*}
  \|\hat{\lambda}_k-\lambda^*\| \leq  \sqrt{2\mathcal{E}_{k}}\leq \sqrt{2C}, \qquad \forall k\geq 1.
\end{eqnarray*}
By similar arguments in Theorem \ref{th:cov_al1}, we obtain
\begin{eqnarray*}
	\|Ax_{k}-b\| \leq  \frac{4(\alpha-1)^2\sqrt{2C}}{s(k-1)(k+\alpha-3)}
\end{eqnarray*}
and
\begin{eqnarray*}
	&&|f(x_k)+g(x_k)-f(x^*)-g(x^*)| \\
	&&\qquad\qquad \leq  \mathcal{L}(x_k,\lambda^*)-\mathcal{L}(x^*,\lambda^*) +\|\lambda^*\|\|Ax_k-b\|\\
	&&\qquad\qquad \leq   \frac{(\alpha-1)^2C}{s(k^2-k)}+\frac{4(\alpha-1)^2\sqrt{2C}\|\lambda^*\|}{s(k-1)(k+\alpha-3)}
\end{eqnarray*}
for all $k>1$.
\end{proof}

\begin{remark}
It was shown in \cite[Theorem 2.9]{Xu2017} that the algorithm \eqref{al0} with adaptive parameters enjoys $\mathcal{O}(1/k^2)$ rate.  Xu \cite{{Xu2017}} did not  discuss whether the convergence rate of algorithm \eqref{al0} is preserved  when the subproblem is solved inexactly. By Theorem \ref{th:cov_al3}, the  $\mathcal{O}(1/k^2)$  convergence rate of Algorithm \ref{al:al2} is preserved  even if  the subproblem  is solved inexactly, provided the errors are sufficiently small. The numerical experiments in section 3 also show the effectiveness of the inexact algorithm.
\end{remark}

\section{Numerical experiments}
In this section, we present  numerical experiments to illustrate the efficiency of the proposed methods. All codes are run on a PC (with 2.3 GHz Quad-Core Intel Core i5 and and 8 GB memory) under MATLAB Version 9.4.0.813654 (R2018a).

\subsection{The quadratic programming problem} In this subsection, we test the  algorithms on the nonnegative linearly constrained quadratic programming problem  (NLCQP):
\begin{equation*}
		\min \ \frac{1}{2}x^TQx+q^Tx, \quad s.t. \ Ax =  b, x\geq 0,
\end{equation*}
where $ q \in\mathbb{R}^n$, $Q\in\mathbb{R}^{n\times n}$ is a positive semidefinite  matrix,  $A\in\mathbb{R}^{m\times n}$,  $ b \in\mathbb{R}^m$.
 Here, we compare Algorithm \ref{al:al2} (Al2)  with the accelerated linearized augmented Lagrangian method (AALM \cite[Algorithm 1]{Xu2017}), which enjoys $\mathcal{O}(1/k^2)$ convergence rate with  adaptive parameters.

\begin{figure}[htbp]
\centering
\subfigure[$\emph{subtol}=10^{-6}$]{
\begin{minipage}[t]{0.32\linewidth}
\centering
\includegraphics[width=2in]{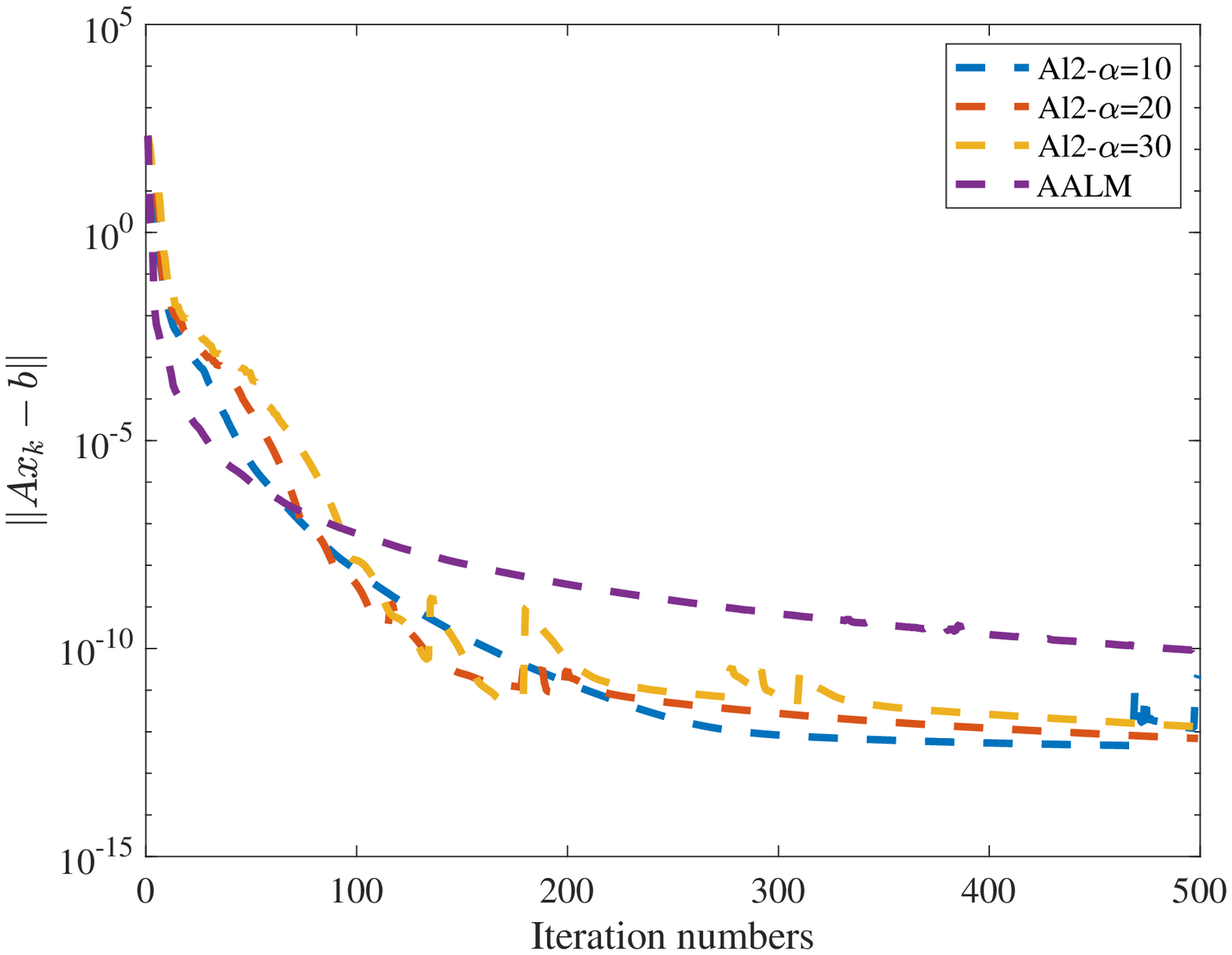}
\centering
\includegraphics[width=2in]{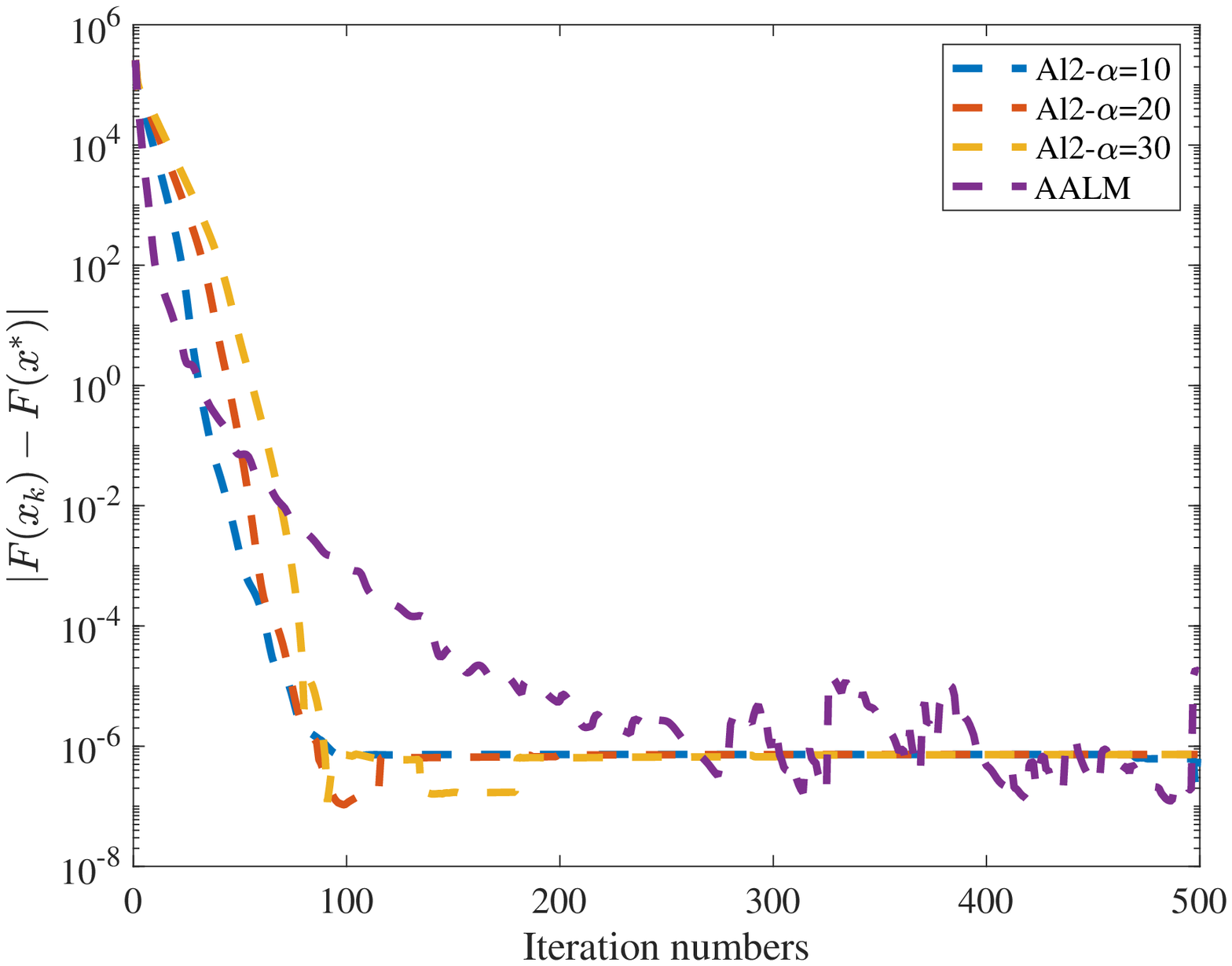}
\end{minipage}%
}%
\subfigure[$\emph{subtol}=10^{-8}$]{
\begin{minipage}[t]{0.32\linewidth}
\centering
\includegraphics[width=2in]{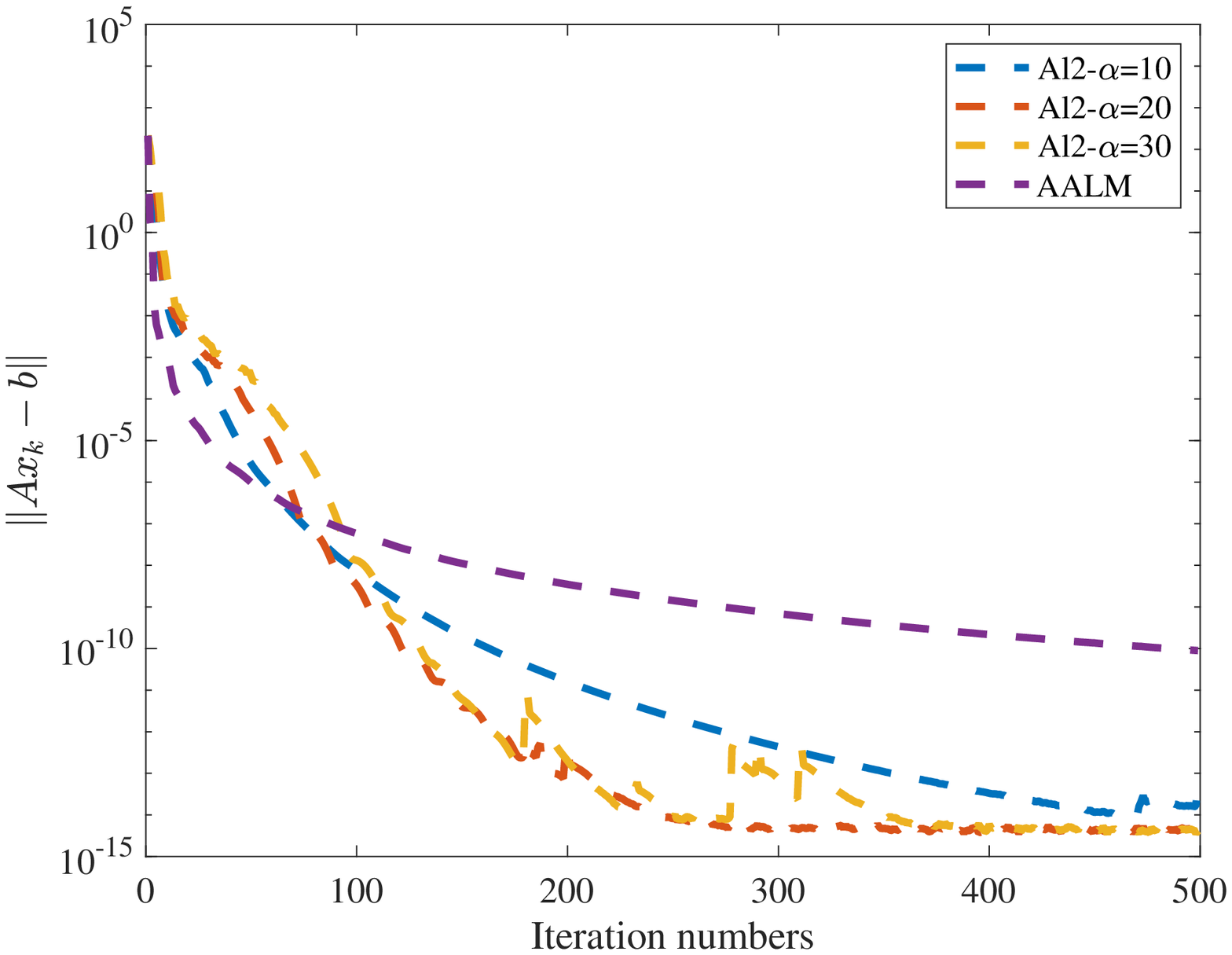}
\centering
\includegraphics[width=2in]{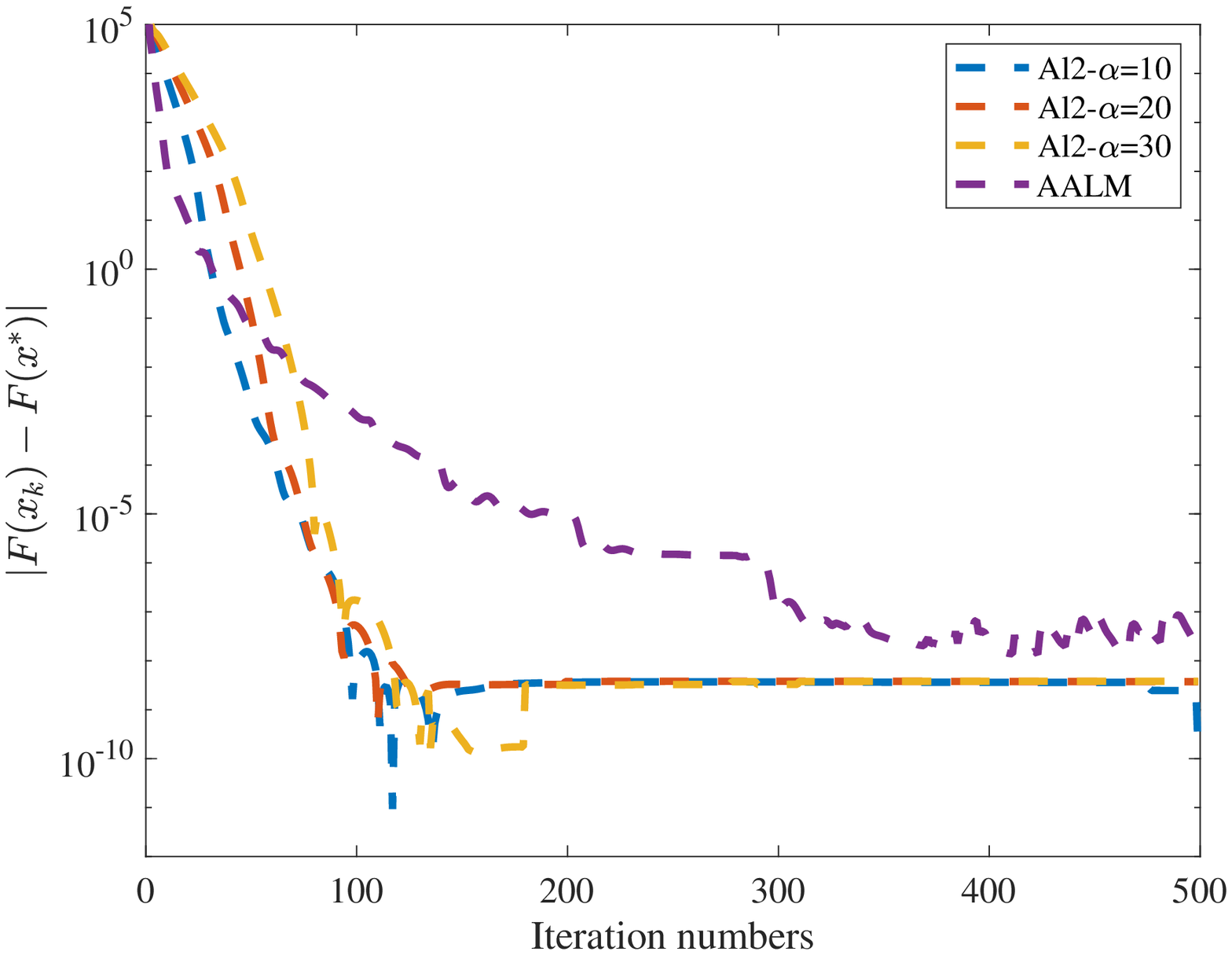}
\end{minipage}%
}%
\subfigure[$\emph{subtol}=10^{-10}$]{
\begin{minipage}[t]{0.32\linewidth}
\centering
\includegraphics[width=2in]{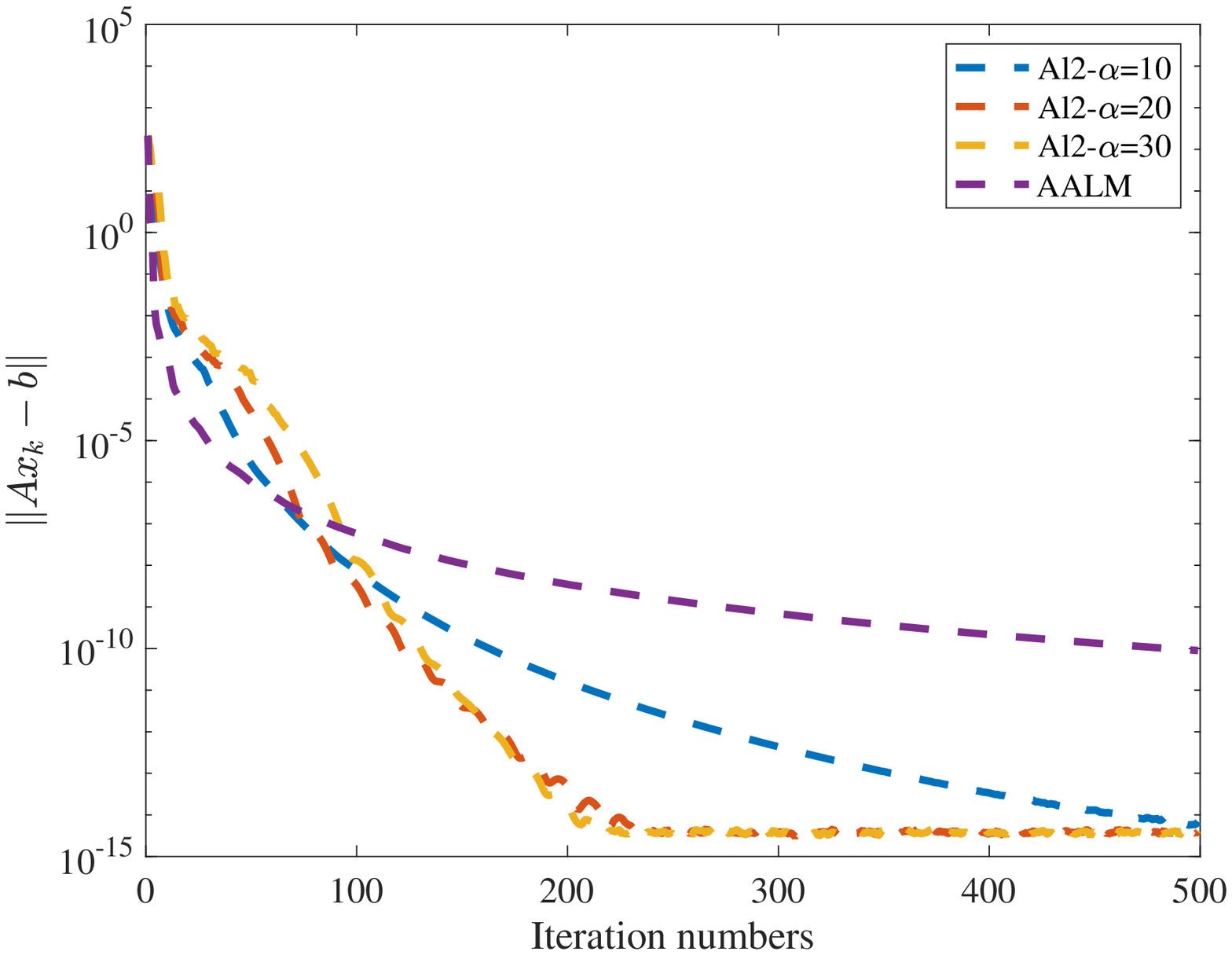}
\centering
\includegraphics[width=2in]{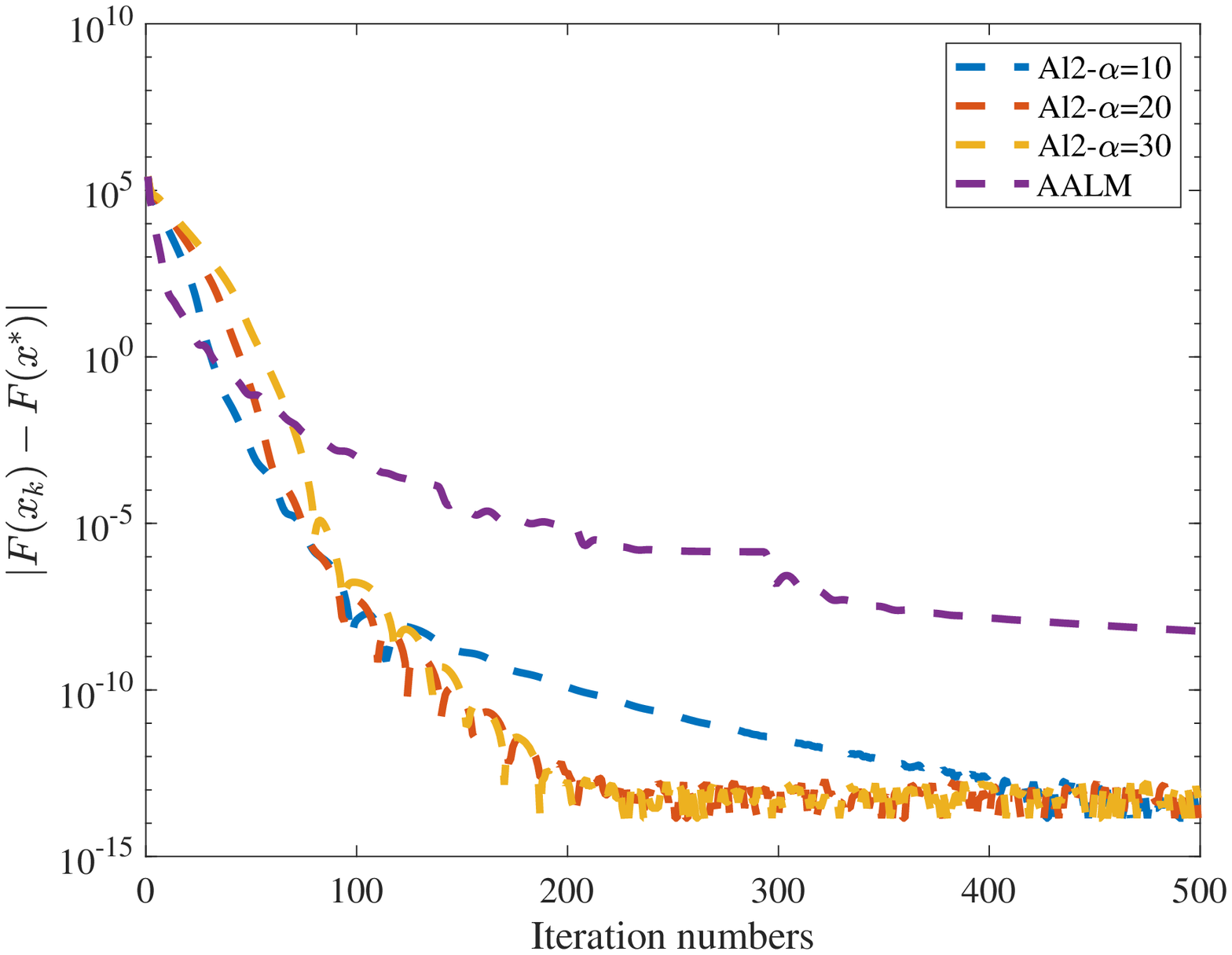}
\end{minipage}%
}%
\caption{Error of objective function and constraint of Al2 and AALM with different \emph{subtol} }\label{fig1}

\end{figure}

Set $m = 100$ and $n = 500$. Let $q$ be generated by standard Gaussian distribution, $b$ be generated by uniform distribution, $A=[B, Id]$ with  $B\in\mathbb{R}^{m\times(n-m)}$  generated by standard Gaussian distribution, $Q = 2H^TH$ with $H\in\mathbb{R}^{n\times n}$  generated  by standard Gaussian distribution. Then $Q$ may not be positive definite. The optimal value $F(x^*)$ is obtained by Matlab function $quadprog$ with tolerance $10^{-15}$.
 In this case, $F(x)=f(x)+g(x)$ with $f(x) = \mathcal{I}_{y\geq 0}(x)$,\ $g(x)= \frac{1}{2}x^TQx+q^Tx$, where $\mathcal{I}_{y\geq 0}$ is the indicator function of the set $\{y|y\geq 0\}$, i.e.,
\begin{equation*}
\mathcal{I}_{y\geq 0}(x)=
	\begin{cases}
		0,\qquad &x\geq 0,\\
		+\infty \qquad &otherwise.
	\end{cases}
\end{equation*}
Set the parameters of Algorithm \ref{al:al2} as: $s=\|Q\|$, $M_k=s*\|Q\|Id$. Set the parameters of AALM (\cite[Algorithm 1]{Xu2017}) with  adaptive parameters, in which $\alpha_k=\frac{2}{k+1}$, $\beta_k=\gamma_k=\|Q\|k$, $P_k=\frac{2\|Q\|}{k}Id$. Subproblems for both algorithms are solved by interior-point algorithms to a tolerance \textit{subtol}. Figure 1 describes  the distance of optimal value $|F(x_k)-F(x^*)|$ and violation of feasibility $\|Ax_k-b\|$ given Al2 with $\alpha = 10,\ 20,\ 30$ and AALM for the first 500 iterations. As shown in Figure 1, Algorithm 2 performs better and more stable than AALM under different \textit{subtol}.

\subsection{The basis pursuit problem}
Consider the following basis pursuit problem:
\begin{equation*}
		\min_x  \quad \|x\|_1, \quad s.t.  \  Ax = b,
\end{equation*}
where $A\in \mathbb{R}^{m\times n}$, $b\in\mathbb{R}^m$ and $m\leq n$. Let  $A$ be generated by  standard Gaussian distribution. The number of nonzero elements of the original solution  $x^*$ is fixed at $0.1*n$, and the  nonzero elements are selected randomly in $[-2,2]$. Set $b=Ax^*$. We compare Algorithm \ref{al:al1} with the inexact  augmented Lagrangian method (IAL \cite[Algorithm 1]{Liu2019}). Here, subproblems for both algorithms are solved by fast iterative shrinkage-thresholding algorithm (FISTA \cite{BeckT2009}, \cite[Algorithm 2]{Liu2019}), and the stopping condition of the FISTA is when
\[ \frac{\|x_k-x_{k-1}\|^2}{\max\{\|x_{k-1}\|,1\}}\leq subtol \]
is satisfied or the number of iterations exceeds $100$, where accuracy $subtol=1e-4,\ 1e-6,\ 1e-8$. In each test, we calculate the residual error $\|Ax_k-b\|$ ($Res$) and the relative error of the solution $\frac{\|x_k-x^*\|}{\|{x}^*\|}$ ($Rel$) with the  stopping condition $Res+Rel\leq 1e-8$. Set the parameters of Algorithm \ref{al:al1} as $\alpha=n$, $s=100$, $M_k=0$, and the parameter of IAL as $\beta=1$. Let  $Init$ and $Time$ denote the number of iterations,  and the CPU time in seconds, respectively. Under different tolerance \textit{subtol} of subproblem, Table \ref{tab1}-Table \ref{tab3} report the results for the basis pursuit problem with different dimensions. We  observe that when the subproblem is solved with different accuracy, Algorithm \ref{al:al1}  is  faster than IAL in terms of the number of iterations and the  cpu time.

\begin{table}[!h]
		\centering
		\caption{Numerical results of Algorithm \ref{al:al1} and IAL with $subtol =1e-4 $}\label{tab1}
		\setlength{\tabcolsep}{3mm}
		\begin{tabular}{cccccccccc}
			\hline
			\multirow{2}{*}{$ID$}&\multicolumn{4}{c} {Algorithm \ref{al:al1}} & &\multicolumn{4}{c}{IAL}\\
			\cline{2-5}\cline{7-10}
			&$Res$&$Rel$&$Init$&$Time$ & &$Res$&$Rel$&$Init$&$Time$\\
			\hline
			$m=60,n=100$& 8.0e-9&3.5e-10&158&0.07&& 7.4e-9&3.2e-10&186& 0.09 \\
			\cline{2-5}\cline{7-10}
			$m=200,n=300$& 7.4e-9&1.1e-10&231&0.78&& 9.3e-9&1.4e-10&281& 0.92 \\
			\cline{2-5}\cline{7-10}
			$m=300,n=500$& 8.3e-9&7.8e-11&278&2.26&& 9.1e-9&8.5e-11&322& 2.63 \\
			\cline{2-5}\cline{7-10}
			$m=600,n=1000$& 7.5e-9&3.3e-11&300&10.73&& 7.8e-9&3.5e-11&374& 13.78\\
			\cline{2-5}\cline{7-10}
			$m=1000,n=1500$& 8.7e-9&2.6e-11& 284&35.12&& 7.6e-9&2.3e-11&327& 46.28\\
			\hline
		\end{tabular}
	\end{table}
	
\begin{table}[!h]
		\centering
		\caption{Numerical results of Algorithm \ref{al:al1} and IAL with $subtol =1e-6$}\label{tab2}
		\setlength{\tabcolsep}{3mm}
		\begin{tabular}{cccccccccc}
			\hline
			\multirow{2}{*}{$ID$}&\multicolumn{4}{c} {Algorithm \ref{al:al1}} & &\multicolumn{4}{c}{IAL}\\
			\cline{2-5}\cline{7-10}
			&$Res$&$Rel$&$Init$&$Time$ & &$Res$&$Rel$&$Init$&$Time$\\
			\hline
			$m=60,n=100$& 9.0e-9&3.9e-10&86&0.05&& 6.2e-9&2.6e-10&130& 0.08 \\
			\cline{2-5}\cline{7-10}
			$m=200,n=300$& 6.0e-9&8.6e-10&140&0.48&& 9.2e-9&1.4e-10&174& 0.58 \\
			\cline{2-5}\cline{7-10}
			$m=300,n=500$& 8.8e-9&8.0e-11&185&1.61&& 9.5e-9&8.9e-11&215& 1.78 \\
			\cline{2-5}\cline{7-10}
			$m=600,n=1000$& 7.8e-9&3.6e-11&232&8.34&& 8.4e-9&3.7e-11&262& 9.68\\
			\cline{2-5}\cline{7-10}
			$m=1000,n=1500$& 9.6e-9&2.7e-11& 193&24.25&& 8.6e-9&2.6e-11&277& 34.67\\
			\hline
		\end{tabular}
\end{table}

\begin{table}[!h]
		\centering
		\caption{Numerical results of Algorithm \ref{al:al1} and IAL with $subtol =1e-8$}\label{tab3}
		\setlength{\tabcolsep}{3mm}
		\begin{tabular}{cccccccccc}
			\hline
			\multirow{2}{*}{$ID$}&\multicolumn{4}{c} {Algorithm \ref{al:al1}} & &\multicolumn{4}{c}{IAL}\\
			\cline{2-5}\cline{7-10}
			&$Res$&$Rel$&$Init$&$Time$ & &$Res$&$Rel$&$Init$&$Time$\\
			\hline
			$m=60,n=100$& 8.5e-9&3.7e-10&32&0.03&& 8.7e-9&3.0e-10&37& 0.03 \\
			\cline{2-5}\cline{7-10}
			$m=200,n=300$& 8.3e-9&1.2e-10&85&0.28&& 8.4e-9&1.2e-10&100& 0.34 \\
			\cline{2-5}\cline{7-10}
			$m=300,n=500$& 9.8e-9&9.0e-11&95&0.84&& 9.5e-9&8.5e-11&121& 1.00 \\
			\cline{2-5}\cline{7-10}
			$m=600,n=1000$& 9.0e-9&4.2e-11&108&4.32&& 8.1e-9&3.3e-11&136& 5.47\\
			\cline{2-5}\cline{7-10}
			$m=1000,n=1500$& 9.8e-9&2.8e-11& 108&13.63&& 8.6e-9&2.4e-11&144& 18.67\\
			\hline
		\end{tabular}
\end{table}
	
\subsection{The linearly constrained $\ell_1-\ell_2$ minimization problem}
Consider the following problem:
\begin{equation*}
	\min_x\ \|x\|_1 +\frac{\beta}{2}\|x\|^2_2 \quad s.t.\ Ax =  b,
\end{equation*}
where $A\in \mathbb{R}^{m\times n}$ and $b\in\mathbb{R}^m$. Let $m = 1500, n=3000$, and $A$ be generated by  standard Gaussian distribution. Suppose that the original solution (signal) ${x}^*\in\mathbb{R}^n$  has only 150 non-zero elements which are generated by the Gaussian distribution $\mathcal{N}(0,4)$ in the interval $[-2,2]$ and that the noise $\omega$ is selected randomly with $\|\omega\|=10^{-4}$,
\[ b= A{x}^*+\omega.\]
Set parameters  for Algorithm \ref{al:al2} (Al2) with $\alpha=20$, $s=1$, $M_k = s\beta$, and the parameters of IAALM (\cite[Algorithm 1]{KangJ2015}) with  $\gamma=1$. Subproblems  are solved by FISTA and the stopping condition is when
\[ \frac{\|x_k-x_{k-1}\|^2}{\max\{\|x_{k-1}\|,1\}}\leq subtol \]
is satisfied or the number of iterations exceeds $100$, where accuracy $subtol= 1e-6,\ 1e-8$. We terminate all the methods when  $\|Ax_k-b\|\leq 5*10^{-4}$.
In each test, we calculate the residual error $res = \|Ax-b\|$,  the relative error $rel = \frac{\|x-x^*\|}{\|{x}^*\|}$ and the signal-to-noise ratio
\[SNR=\log_{10} \frac{\|x^*-\text{mean}(x^*)\|^2}{\|x-{x}^*\|^2},\]
where $x$ is the recovery signal.

\begin{table}[!h]
		\centering
		\caption{Numerical results of Algorithm \ref{al:al2} and IAALM with $subtol = 1e-8$}\label{tab4}
		\setlength{\tabcolsep}{4mm}
		\begin{tabular}{cccccccc}
			\hline
			&$\beta$ & 0.01 &0.05 & 0.1 & 0.5 & 1 & 1.5\\
			\hline
			\multirow{2}{*}{$Init$}& Al2&10&10&10&13&15&42\\
			& IAALM&35&37&34&33&35&100+\\
			\hline
			\multirow{2}{*}{$Time$}& Al2&13.15&11.80&10.47&12.38&14.99&46.75\\
			& IAALM&38.12&36.77&35.34&36.50&34.54&106.45\\
			\hline
			\multirow{2}{*}{$Res$}& Al2&4.18e-4&3.37e-4&3.18e-4&2.47e-4&4.74e-4&4.34e-4\\
			& IAALM&4.91e-4&4.82e-4&4.75e-4&4.22e-4&4.21e-4&6.50e-3\\
			\hline
			\multirow{2}{*}{$Rel$}& Al2&1.31e-6&8.01e-7&6.35e-7&4.18e-7&9.09e-7&7.58e-2\\
			& IAALM&9.05e-7&8.47e-7&9.19e-7&8.15e-7&9.62e-7&7.34e-2\\
			\hline
			\multirow{2}{*}{$SNR$}& Al2&1.17e+2&1.22e+2&1.24e+2&1.28e+2&1.21e+2&2.24e+1\\
			& IAALM&1.20e+2&1.21e+2&1.21e+2&1.22e+2&1.20e+2&2.27e+1\\
			\hline
		\end{tabular}	
\end{table}

\begin{table}[!h]
		\centering
		\caption{Numerical results of Algorithm \ref{al:al2} with $subtol = 1e-6$ }\label{tab5}
		\setlength{\tabcolsep}{4mm}
		\begin{tabular}{cccccccc}
			\hline
			$\beta$ & 0.01 &0.05 & 0.1 & 0.5 & 1 & 1.5\\
			\hline
			{$Init$}&25&22&17&13&18&100+\\
			\hline
			{$Time$}&8.29&8.11&7.52&8.65&12.74&36.48\\
			\hline
			{$Res$}&4.62e-4&4.89e-4&4.93e-4&4.36e-4&4.36e-4&1.50e-3\\
			\hline
			{$Rel$}&1.52e-6&1.68e-6&1.30e-6&7.17e-7&9.12e-7&8.10e-2\\
			\hline
			{$SNR$}&1.16e+2&1.15e+2&1.18e+2&1.22e+2&1.21e+2&2.18e+1\\
			\hline
		\end{tabular}	
\end{table}
\begin{figure}[h]
\centering
\subfigbottomskip=1pt
\subfigcapskip=-10pt
\subfigure[Recovery with $\beta = 0.5$]{\includegraphics[width=0.45\linewidth,height=4cm]{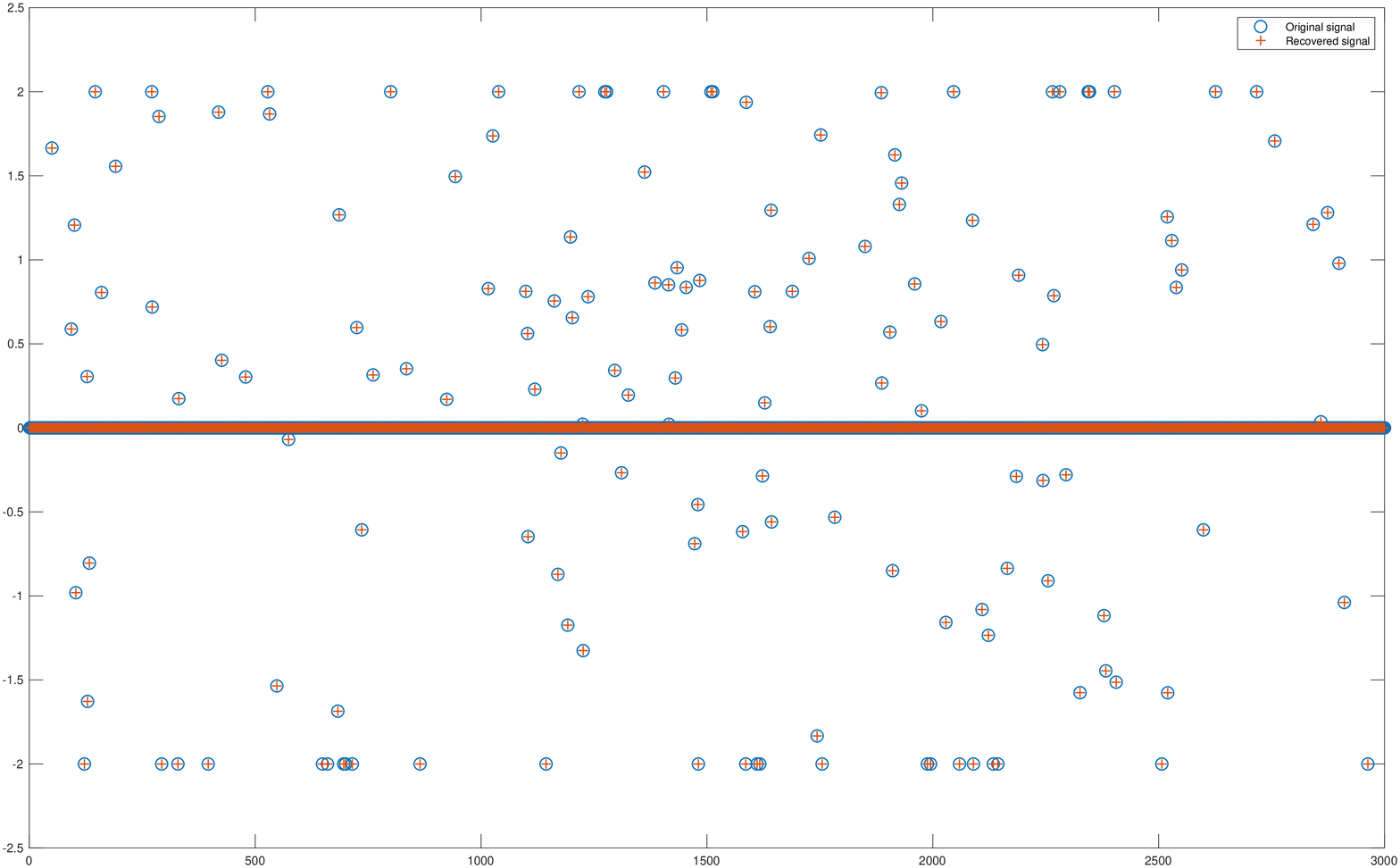}}\quad
\subfigure[Recovery with $\beta = 1.5$]{\includegraphics[width=0.45\linewidth,height=4cm]{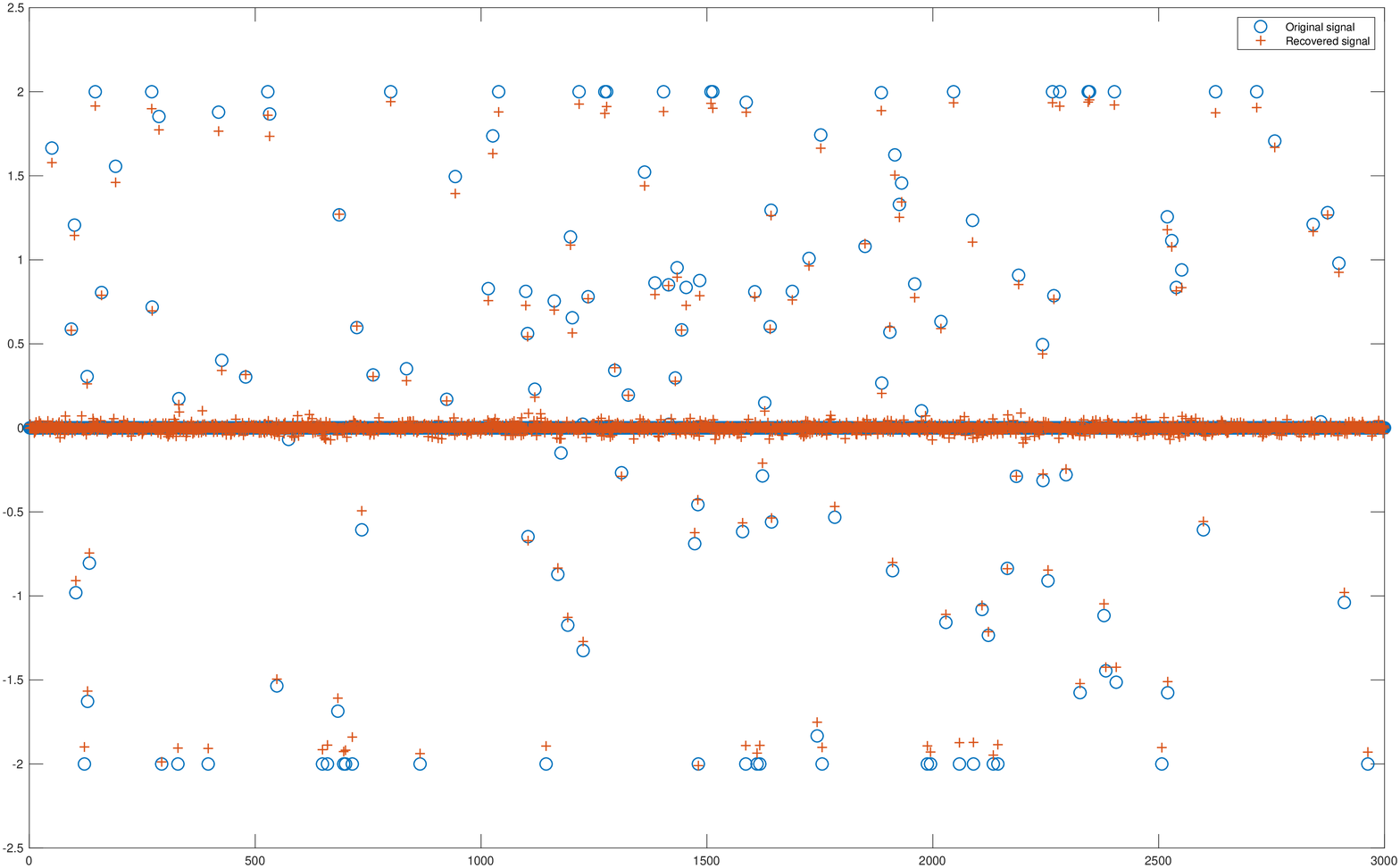}}
\caption{ Original sparse signal and the final estimated solution of Algorithm \ref{al:al2} with $subtol=1e-8$.}\label{fig_2}
\end{figure}

In Table \ref{tab4},  we present the  numerical results of Algorithm 2 and IAALM for various $\beta$. When $subtol=1e-6$, IAALM doesn't work well, we list  the  numerical results of Algorithm 2 in Table \ref{tab5}. Based on the $Rel$ and $SNR$, it is seen that the sparse original signal is well restored when $\beta\leq 1$. This is also shown in Figure \ref{fig_2}.

\section{Conclusion}
In this paper, we propose two inertial accelerated primal-dual methods for solving linear equality constrained convex optimization problems. Assuming merely convexity, we show the inertial primal-dual methods own $\mathcal{O}(1/k^2)$ convergence rates even if the subproblem is solved inexactly. The numerical results demonstrate the validity  and superior performance of our methods over  some existing methods.

\end{document}